\title{\vspace{-0.7cm}Counting and packing Hamilton $\ell$-cycles in dense hypergraphs}
\author{ Asaf Ferber
\thanks{Institute of Theoretical Computer Science
ETH, 8092 Z\"urich, Switzerland. Email: asaf.ferber@inf.ethz.ch.}
\and Michael Krivelevich
\thanks{School of Mathematical Sciences, Raymond and Beverly Sackler
Faculty of Exact Sciences, Tel Aviv University, Tel Aviv, 69978,
Israel. Email: krivelev@post.tau.ac.il. Research supported in part
by USA-Israel BSF Grant 2010115 and by grant 912/12 from the Israel
Science Foundation.} \and Benny Sudakov
\thanks{Department of Mathematics, ETH, 8092 Z\"urich, Switzerland. Email:
benjamin.sudakov@math.ethz.ch. Research supported in part by SNSF
grant 200021-149111.}}
\newif\ifnotesw\noteswtrue
\date{}
\newtheorem{theorem}{Theorem}[section]
\newtheorem{lemma}[theorem]{Lemma}
\newtheorem{claim}[theorem]{Claim}
\newtheorem{proposition}[theorem]{Proposition}
\newtheorem{remark}[theorem]{Remark}
\newcommand{\Bin}{\ensuremath{\textrm{Bin}}}
\newenvironment{proof}{\noindent{\bf Proof\,}}{\hfill$\Box$}
\begin{document}
\maketitle

\begin{abstract}
A $k$-uniform hypergraph $\mathcal H$ contains a
Hamilton $\ell$-cycle, if there is a cyclic ordering of
the vertices of $\mathcal H$ such that the edges of the cycle are segments of length
$k$ in this ordering and any two consecutive edges $f_i,f_{i+1}$ share exactly
$\ell$ vertices. We consider problems about packing and counting
Hamilton $\ell$-cycles in hypergraphs of large minimum degree. Given
a hypergraph $\mathcal H$, for a $d$-subset $A\subseteq V(\mathcal
H)$, we denote by $d_{\mathcal H}(A)$ the number of distinct
\emph{edges} $f\in E(\mathcal H)$ for which $A\subseteq f$, and set
$\delta_d(\mathcal H)$ to be the minimum $d_{\mathcal H}(A)$ over all
$A\subseteq V(\mathcal H)$ of size $d$. We show that if a
$k$-uniform hypergraph on $n$ vertices $\mathcal H$ satisfies
$\delta_{k-1}(\mathcal H)\geq \alpha n$ for some $\alpha>1/2$, then
for every $\ell<k/2$ $\mathcal H$ contains
$(1-o(1))^n\cdot n!\cdot
\left(\frac{\alpha}{\ell!(k-2\ell)!}\right)^{\frac{n}{k-\ell}}$
Hamilton $\ell$-cycles. The exponent above is easily seen to be
optimal. In addition, we show that if $\delta_{k-1}(\mathcal H)\geq
\alpha n$ for $\alpha>1/2$, then $\mathcal H$ contains $f(\alpha)n$
edge-disjoint Hamilton $\ell$-cycles for an explicit function
$f(\alpha)>0$. For the case where every $(k-1)$-tuple $X\subset V({\mathcal H})$ satisfies
$d_{\mathcal H}(X)\in (\alpha\pm o(1))n$, we show that $\mathcal H$
contains edge-disjoint Haimlton $\ell$-cycles which cover all but
$o\left(|E(\mathcal H)|\right)$ edges of $\mathcal H$.  As a tool we prove the
following result which might be of independent interest: For a
bipartite graph $G$ with both parts of size $n$, with minimum degree
at least $\delta n$, where $\delta>1/2$, and for $p=\omega(\log
n/n)$ the following holds. If $G$ contains an $r$-factor for
$r=\Theta(n)$, then by retaining edges of $G$ with probability $p$
independently at random, w.h.p the resulting graph contains a
$(1-o(1))rp$-factor.

\end{abstract}

\section{Introduction}

Hamiltonicity is definitely one of the most studied properties of
graphs in the last few decades, and many deep and interesting
results have been obtained about it. In his seminal paper \cite{Dirac}, Dirac
proved that every graph on $n$ vertices, $n\ge 3$, with minimum
degree at least $n/2$ is Hamiltonian. The complete bipartite graph
$K_{m,m+1}$ shows that this theorem is best possible, i.e., the
minimum degree condition cannot be improved. Moreover, this extremal
example hints that the addition of one more edge creates many
Hamilton cycles. It thus natural to ask the following questions:

\begin{enumerate} [$(1)$]
\item How many \emph{edge-disjoint} Hamilton cycles does a {\em Dirac graph} (that is, a
graph $G$ on $n$ vertices with minimum degree $\delta(G)\geq n/2$)
have?
\item How many \emph{distinct} Hamilton cycles does a Dirac graph
have?
\end{enumerate}

These questions have been examined by various researchers and many
results are known. Among them are Christofides, K\"uhn and Osthus
\cite{CKO}, Cuckler and Kahn \cite{CK}, K\"uhn, Lapinskas and Osthus
\cite{KLO}, Nash-Williams \cite{NashWilliams1, NashWilliams2,
NashWilliams3}, S\'ark\"ozy, Selkow and Szemer\'edi \cite{SSS}, and
the authors of this paper \cite{FKS}. In particular, it is worth
mentioning very recent remarkable results due to Csaba, K\"uhn, Lo,
Osthus and Treglown \cite{CKLOT} who settled the long standing
conjectures made by Nash-Williams \cite{NashWilliams1,
NashWilliams2, NashWilliams3}. They showed that every $d$-regular
Dirac graph contains $\lfloor \frac{d}{2}\rfloor$ edge-disjoint
Hamilton cycles, and that every graph $G$ on $n$ vertices with
minimum degree $\delta\geq n/2$ contains at least
$reg_{even}(n,\delta)/2$ edge-disjoint Hamilton cycles, where
$reg_{even}(n,\delta)$ denotes the largest degree of an even-regular
spanning subgraph one can guarantee in a graph on $n$ vertices with
minimum degree $\delta$. These results are clearly optimal.

Note that if a graph $G$ contains $r$ edge-disjoint Hamilton cycles,
then in particular $G$ contains a $2r$-\emph{factor}, that is, a
spanning $2r$-regular subgraph. Therefore, the following question is
also related to the two mentioned above:
\begin{enumerate}[$(3)$]
\item Given a graph $G$ with minimum degree $\delta(G)$, what is the
maximal $r$ for which $G$ contains an $r$-factor?
\end{enumerate}

As in Dirac's Theorem, the complete bipartite graph $K_{m,m+1}$ with
unbalanced parts demonstrates that for $\delta(G)<n/2$ one can not
expect to obtain even a $1$-factor. The question about finding the
maximal $r:=r(\delta,n)$ such that any graph $G$ on $n$ vertices
with minimum degree $\delta$ must contain an $r$-factor has also
been investigated by various researchers. Among them are Katerinis
\cite{Katerinis} and Hartke, Martin and Seacrest \cite{HMS}. The
former showed that any Dirac graph contains an $r$-factor for $r\geq
\frac{n+5}{4}$ (he also gave an example of a Dirac graph $G$ on $n$
vertices that does not contain an $\frac{n+6}{4}$-factor), and the
latter generalized the result to graphs with minimum degree
$\delta$, with $\delta\geq n/2$.

In this paper we investigate analogous questions in hypergraphs.
First we need to define the notion of a Hamilton cycle in a
hypergraph. For two positive integers $0\leq \ell < k$, a
$(k,\ell)$-\emph{cycle} is a $k$-uniform hypergraph whose vertices
can be ordered cyclically such that the edges are segments of that
order and such that every two consecutive edges share exactly $\ell$
vertices. In case that $0\leq \ell\leq k/2$, we refer to
$(k,\ell)$-cycles as \emph{loose} cycles. Now, let $\mathcal H$ be a
$k$-uniform hypergraph and let $0\leq \ell<k$. We say that $\mathcal
H$ contains a \emph{Hamilton $\ell$-cycle} if $\mathcal H$ contains
a $(k,\ell)$-cycle using all the vertices of $\mathcal H$. Note
that in the case $\ell=0$ a Hamilton $\ell$-cycle corresponds to a
\emph{perfect matching}.

Analogously to graphs, the connection between the degrees in
hypergraphs and the appearance of Hamilton $\ell$-cycles is well
studied, and many results have been derived. Of course, an obvious
necessary condition for a $k$-uniform hypergraph on $n$ vertices to
contain a Hamilton $\ell$-cycle is for $(k-\ell)$ to divide $n$.
Before we proceed to describe the previous work and to state our
results, let us introduce some notation. Given a hypergraph
$\mathcal H$, for a $d$-subset $A\in \binom{V(\mathcal H)}{d}$, we
denote by $d_{\mathcal H}(A)$ the number of distinct \emph{edges}
$f\in E(\mathcal H)$ for which $A\subseteq f$, and set
$$\delta_d(\mathcal H)=\min d_{\mathcal H}(A), \text{ and }\Delta_d(\mathcal
H)=\max d_{\mathcal H}(A),$$ where the minumum and the maximum are
taken over all subsets $A\subseteq V(\mathcal H)$ of size exactly
$d$.
In a similar way, for two subsets $X,Y\subseteq V(\mathcal H)$, we
denote by $d_{\mathcal H}(X,Y)$ the size of the \emph{neighborhood
of $X$ in $Y$}. That is, $d_{\mathcal H}(X,Y):=|\left\{Z\subseteq Y:
X\cup Z \in E(\mathcal H)\right\}|$. For a fixed set $Y$ and an
integer $d<k$, we set
$$\delta_{d}(Y)=\min d_{\mathcal H}(X,Y), \text{ and }\Delta_{d}(Y)=\max d_{\mathcal
H}(X,Y),$$ where the minimum and maximum are taken over all subsets
$X\subseteq V(\mathcal H)$ of size $d$.


Katona and Kierstead were the first to obtain a Dirac-type result
for hypergraphs. They proved in \cite{KK} that if
$\delta_{k-1}(\mathcal H)\geq \left(1-\frac{1}{2k}\right)n+O_k(1)$,
then $\mathcal H$ contains a Hamilton $(k-1)$-cycle. They also gave
an example for a hypergraph $\mathcal H$ with $\delta_{k-1}(\mathcal
H)=\lfloor \frac{n-k+3}{2}\rfloor$ which does not contain a Hamilton
$(k-1)$-cycle, and implicitly conjectured that this is the correct
bound. For $k=3$, this conjecture has been confirmed by R\"odl,
Ruci\'nski and Szemer\'edi in \cite{RRS}. For $k\geq 4$, it is
proved in \cite{RRS1} that $\delta_{k-1}(\mathcal H)\approx \frac
n2$ is asymptotically the correct bound for the existence of a
Hamilton $(k-1)$-cycle in $\mathcal H$. A construction of
Markstr\"om and Ruci\'nski from \cite{MR} demonstrates that
$\delta_{k-1}(\mathcal H)\approx \frac n2$ is necessary for having a
perfect matching in $\mathcal H$, and since whenever $(k-\ell)|k$, a
Hamilton $\ell$-cycle contains a perfect matching, one obtains that
indeed $\delta_{k-1}(\mathcal H)\approx \frac n2$ is the correct
(asymptotic) bound for enforcing the existence of a Hamilton
$\ell$-cycle for each such $\ell$. For values of $\ell$ for which
$(k-\ell)\nmid k$, K\"uhn, Mycroft and Osthus showed in \cite{KMO}
that $\delta_{k-1}(\mathcal H)\approx \frac{n}{\lceil
\frac{k}{k-\ell}\rceil(k-\ell)}$ is the correct asymptotic bound for
enforcing the existence of a Hamilton $\ell$-cycle. There are many
other important and interesting results regarding the connection
between the minimum degree of a hypergraph and the existence of
Hamilton $\ell$-cycles which we did not mention, and for a more
complete list we refer the reader to the excellent survey of R\"odl
and Ruci\'nski \cite{RR}.

Now we are ready to state our main results. As far as we know, this
paper is the first attempt to deal with Questions (1)--(3) in the
hypergraph setting. In our first theorem we show that a dense
$k$-uniform hypergraph contains the ``correct" number of loose
Hamilton cycles. That is, we show that given a $k$-uniform
hypergraph $\mathcal H$ on $n$ vertices with $\delta_{k-1}(\mathcal
H)\geq \alpha n$, the number of Hamilton $\ell$-cycles in $\mathcal
H$ is at least (up to a sub-exponential factor) the expected number
of Hamilton $\ell$-cycles in a random $k$-uniform hypergraph with
edge probability $p=\alpha$ (that is, a hypergraph obtained by
choosing every $k$-subset of $[n]$ with probability $p$,
independently at random). The expected number of such cycles is
$$(n-1)!\cdot\frac{k-\ell}{2}\cdot\left(\frac{\alpha}{\ell!(k-2\ell)!}\right)^{\frac{n}{k-\ell}}.$$
Indeed, first enumerate the vertices and define the edges of the
$(k,\ell)$-cycle accordingly. Then, in each of the
$\frac{n}{k-\ell}$ edges, divide by the number of ways to order the
first $\ell$ vertices and the next $k-2\ell$ vertices. Finally,
divide by $\frac{2n}{k-\ell}$, which is the number of different ways
to obtain the same cycle.

\begin{theorem} \label{main1}
Let $\ell$ and $k$ be integers satisfying $0\leq \ell< k/2$, and let
$1/2<\alpha\leq 1$. Then, for sufficiently large integer $n$ the
following holds. Suppose that
\begin{enumerate}[$(i)$]
\item $(k-\ell)|n$, and
\item $\mathcal H$ is a $k$-uniform hypergraph on $n$ vertices, and
\item $\delta_{k-1}(\mathcal H)\geq \alpha n$.
\end{enumerate}

Then, the number of Hamilton $\ell$-cycles in $\mathcal H$ is at
least
$$(1-o(1))^n\cdot n!\cdot
\left(\frac{\alpha}{\ell!(k-2\ell)!}\right)^{\frac{n}{k-\ell}}.$$
\end{theorem}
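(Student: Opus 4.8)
The plan is to prove this counting statement by a now-standard "absorbing + random greedy / entropy" strategy, adapted so that we keep track of the number of extensions at each step and accumulate the bound $\left(\tfrac{\alpha}{\ell!(k-2\ell)!}\right)^{n/(k-\ell)}$ from the per-step choices. First I would set up the combinatorial skeleton: a Hamilton $\ell$-cycle on $n$ vertices decomposes into $m := n/(k-\ell)$ edges, and since $\ell < k/2$ each edge contributes a "private" block of $k-2\ell$ vertices that appears in no other edge, plus two "link" blocks of $\ell$ vertices shared with the two neighbouring edges. So building the cycle amounts to choosing an ordered sequence of the $m$ link-sets (each of size $\ell$) and, between consecutive links, an ordered private block of size $k-2\ell$; the factor $n!$ counts the ordered placement of all vertices, and the $\ell!(k-2\ell)!$ in the denominator removes the over-counting inside each edge, exactly as in the informal computation preceding the theorem.

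The heart of the argument is a counting version of the connecting/absorbing method. I would first reserve a small random absorbing structure $\mathcal{A}$ — a collection of short $\ell$-paths that can flexibly absorb any small leftover set of vertices into a Hamilton $\ell$-cycle; existence of such a structure under $\delta_{k-1}(\mathcal H)\ge\alpha n$ with $\alpha>1/2$ is proved exactly as in the existence results of R\"odl--Ruci\'nski--Szemer\'edi type, using that any $(k-1)$-set has $\ge\alpha n > n/2$ neighbours, so any two $(\ell)$-sets can be joined by many short paths. Then I would build a long $\ell$-path covering all but $o(n)$ of the remaining vertices by a greedy extension process: having built a partial $\ell$-path ending in an ordered $\ell$-set $L$, the number of ways to extend it by one edge is, by the degree condition and a simple supersaturation/counting argument, at least $(1-o(1))\cdot\alpha\cdot\binom{|\text{unused}|}{k-2\ell}\cdot\ell!(k-2\ell)!$ roughly (choose the private block among the unused vertices, then the next link, each such choice lying in $\ge\alpha n$ edges). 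Multiplying these lower bounds over the $(1-o(1))m$ steps gives the claimed exponential factor; crucially, because we only lose a $(1-o(1))$ factor per step and there are $O(n)$ steps, the cumulative loss is $(1-o(1))^n$, which is absorbed into the stated bound. Finally, closing up via the absorbers and the $o(n)$ leftover vertices costs only a further sub-exponential (indeed $(1-o(1))^n$) factor, and one divides by $O(n)$ for the choice of starting point / orientation, which is sub-exponential and harmless.

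**The main obstacle** I anticipate is making the greedy count honest: a naive product of per-step extension counts over-counts (different step-sequences can yield the same path) and, more seriously, one must ensure that "most" partial paths really do have close to the average number of extensions — a worst-case partial path could have few good next edges if the unused vertex set is adversarial. The clean fix is to phrase the lower bound via an entropy / permanent-type estimate rather than a literal greedy product: show that the number of long $\ell$-paths (with specified endpoint structure, on a given $(1-o(1))n$-subset) is at least the target by a Br\'egman/Radhakrishnan-type bound on a suitable auxiliary bipartite "extension" hypergraph whose minimum degree is controlled by $\delta_{k-1}(\mathcal H)\ge\alpha n$, or alternatively run the greedy process randomly and bound the number of consistent histories from below via a second-moment / martingale concentration argument showing that the unused set stays quasirandom (every $(k-1)$-set still sees $(1-o(1))\alpha$ fraction of it). Either way, the remaining pieces — the absorbing lemma, the $o(n)$-leftover cleanup, and checking that all the error factors multiply out to $(1-o(1))^n$ — are routine given $\alpha>1/2$, so this concentration-of-the-extension-count step is where the real work lies.
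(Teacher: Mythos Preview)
Your proposal takes a substantially different route from the paper, and while the outline could perhaps be pushed through, it carries real difficulties that the paper sidesteps entirely.

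The paper uses no absorbing and no greedy extension. Instead, it fixes the entire ``skeleton'' of the cycle in advance: choose a partition $V(\mathcal H)=A\cup B$ with $|A|=\ell m$ (where $m=n/(k-\ell)$), an ordered sequence $\mathcal M_A=(F_0,\dots,F_{m-1})$ of disjoint $\ell$-subsets of $A$ (these are your ``link blocks'', already arranged cyclically), and an unordered partition $\mathcal M_B$ of $B$ into $(k-2\ell)$-sets (the ``private blocks''). Given this data, a Hamilton $\ell$-cycle consistent with the skeleton is exactly a perfect matching in the auxiliary bipartite graph $G_{\mathcal H}$ with parts $\{F_iF_{i+1}:0\le i<m\}$ and $\mathcal M_B$, where $F_iF_{i+1}$ is joined to $Y\in\mathcal M_B$ iff $F_i\cup Y\cup F_{i+1}\in E(\mathcal H)$. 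A concentration argument (over the random choice of partition) shows $\delta(G_{\mathcal H})\ge(\alpha-o(1))m$ for almost all skeletons. One then applies the Cuckler--Kahn lower bound: a bipartite graph on $m+m$ vertices with minimum degree $\delta m\ge m/2$ has at least $(1-o(1))^m\delta^m m!$ perfect matchings. Summing over skeletons and dividing by the overcount (at most $2m$) gives the theorem. The entire argument is a reduction to a black box about bipartite graphs; no path-building, no leftover vertices, no absorption.

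Two concrete issues with your plan. First, Br\'egman/Radhakrishnan is an \emph{upper} bound on the permanent; it goes the wrong way here. What you need is a lower bound for the number of perfect matchings in a dense bipartite graph, and that is precisely Cuckler--Kahn (or, less sharply, Falikman--Egorychev/van der Waerden after passing to a regular subgraph). Second, the absorbing layer adds nothing to the count --- the absorption step is essentially deterministic once the leftover is fixed --- so all the work still lives in counting long $\ell$-paths on a $(1-o(1))n$-set, which is the same problem you started with. Your option (b), a randomised greedy with martingale control of the unused set, can in principle be made to work, but it is considerably heavier than the paper's two-line reduction, and you have not indicated how to get concentration strong enough to survive a product over $\Theta(n)$ steps. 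The paper's trick of committing to the full cyclic link structure up front, so that what remains is a single bipartite matching problem rather than a sequential process, is exactly what lets it avoid this difficulty.
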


This is an extension to hypergraphs of the result obtained by
Cuckler and Kahn \cite{CK} for the case of graphs. We remark that their bound is
more accurate and is phrased in terms of certain entropy function
over edge weighting of the graph. We will use their result in our
proof.

Since in a $k$-uniform hypergraph $\mathcal H$ on $n$ vertices, a
Hamilton $\ell$-cycle contains $\frac{n}{k-\ell}$ edges, one cannot
hope to find more than $|E(\mathcal H)|/\frac{n}{k-\ell}$
edge-disjoint such cycles. In the following theorem we show that
indeed, up to a multiplicative factor, any dense $k$-uniform
hypergraph $\mathcal H$ contains the correct number of edge-disjoint
loose Hamilton cycles.

\begin{theorem} \label{main2}

Let $k$ and $\ell$ be integers satisfying $0\leq\ell < k/2$, and let
$1/2<\alpha'<\alpha\leq 1$. Then for all sufficiently large $n$ the
following holds. Suppose that
\begin{enumerate}[$(i)$]
\item $(k-\ell)|n$, and
\item $\mathcal H$ is a $k$-uniform hypergraph on $n$ vertices, and
\item $\delta_{k-1}(\mathcal H)\geq \alpha n$.
\end{enumerate}
Then ${\cal H}$ contains at least
$$
(1-o(1))\cdot \frac{ f(\alpha')|E({\cal H})|}{\frac{n}{k-\ell}}
$$
edge-disjoint Hamilton $\ell$-cycles, where
$f(x)=\frac{x+\sqrt{2x-1}}{2}$.

\end{theorem}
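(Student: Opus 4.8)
The plan is to reduce packing Hamilton $\ell$-cycles to packing perfect matchings in an auxiliary bipartite-type structure, and then to combine the counting result (Theorem \ref{main1}) with a random sparsification argument, the latter controlled by the bipartite $r$-factor lemma stated in the abstract. First I would fix the small parameter: choose $\alpha''$ with $1/2<\alpha'<\alpha''<\alpha$ and set $p=p(n)$ to be a slowly decaying function, say $p=n^{-1/2}$ (anything with $\omega(\log n/n)\le p=o(1)$ works). The idea is to build the edge-disjoint cycles in $\Theta(1/p)$ ``rounds'': in each round we extract a single sparse random sub-hypergraph $\mathcal H_i\subseteq \mathcal H$ (obtained by keeping each edge with probability $p$, roughly), show that $\mathcal H_i$ still satisfies a Dirac-type codegree condition of the form $\delta_{k-1}(\mathcal H_i)\ge \alpha'' p n$ w.h.p., and then appeal to Theorem \ref{main1} (or rather to the existence statement underlying it) to find a single Hamilton $\ell$-cycle inside $\mathcal H_i$, which we then remove from $\mathcal H$ before the next round.

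The heart of the argument is the accounting that shows this process can be run for enough rounds to cover a $f(\alpha')$-fraction of the edges. For this I would track, for each $(k-1)$-set $X$, the ``remaining degree'' $d_{\mathcal H'}(X)$ in the leftover hypergraph $\mathcal H'$ after several rounds. Here the bipartite $r$-factor lemma enters: viewing the incidences between $(k-1)$-sets and their extensions as a bipartite graph (on the appropriate balanced vertex classes, using regularization to pass to an $r$-regular spanning subgraph with $r=\Theta(n)$), retaining edges with probability $p$ keeps a $(1-o(1))rp$-factor w.h.p.; this is exactly what guarantees that the random slice $\mathcal H_i$ inherits, up to $(1+o(1))$, a scaled-down codegree condition. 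Iterating, after $t$ rounds the leftover $\mathcal H'$ has $\delta_{k-1}(\mathcal H')\ge (\alpha - t\cdot(1+o(1))p\cdot\beta)n$ for the relevant constant $\beta$ coming from the guaranteed regular factor, and we may continue as long as this stays above $\alpha' n$; the function $f(x)=\tfrac{x+\sqrt{2x-1}}{2}$ arises precisely as the optimum of this trade-off — it is the largest density of an even-regular spanning subhypergraph structure one is guaranteed starting from codegree $\alpha' n$ (the hypergraph analogue of $\reg_{even}$), so that the total number of edges removed is $(1-o(1))f(\alpha')|E(\mathcal H)|$, giving $(1-o(1))f(\alpha')|E(\mathcal H)|/\tfrac{n}{k-\ell}$ edge-disjoint cycles.

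To make the per-round step rigorous one needs a uniform (over all $(k-1)$-sets, of which there are only polynomially many) concentration bound for $d_{\mathcal H_i}(X)$, which is a routine Chernoff-plus-union-bound computation once the rounds are defined via independent coin flips on the edges; the mild dependence introduced by deleting previous cycles is handled by revealing the randomness for all rounds at once and arguing that the earlier cycles use a negligible $o(p n)$ fraction of any codegree. The main obstacle, and where the bipartite $r$-factor lemma is doing essential work, is ensuring that after sparsifying we still have a \emph{factor-like} regularity — not merely the right \emph{minimum} codegree — so that Theorem \ref{main1}'s hypotheses (which only ask for a minimum codegree bound) can legitimately be re-applied to $\mathcal H_i$ round after round without the error terms compounding catastrophically. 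I would carry out the steps in the order: (1) fix $\alpha',\alpha'',p$ and set up the regular-factor reduction; (2) prove the one-round lemma via the bipartite $r$-factor result and Chernoff; (3) run the iteration, bounding leftover codegree and identifying the stopping density as $f(\alpha')$; (4) extract one Hamilton $\ell$-cycle per round from Theorem \ref{main1}; (5) sum up the removed edges. I expect step (2) — transferring the codegree condition through sparsification while keeping the $(1+o(1))$ factor — to be the technically delicate point.
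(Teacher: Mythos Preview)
Your proposal has a genuine gap at its core. The per-round step (4) cannot work: after sparsifying with probability $p=o(1)$ you have $\delta_{k-1}(\mathcal H_i)\approx \alpha'' p n = o(n)$, which is far below the $n/2$ threshold that Theorem~\ref{main1} (and every known Dirac-type existence result for Hamilton $\ell$-cycles) requires. There is no ``existence statement underlying it'' that produces a Hamilton cycle from a sublinear codegree condition alone --- such a hypergraph need not even be connected. Relatedly, your accounting is off by a polynomial factor: $\Theta(1/p)$ rounds yielding one cycle each gives at most $\Theta(n^{1/2})$ cycles, whereas the theorem promises $\Theta(n^{k-1})$ of them. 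Finally, the function $f$ does not arise from any iteration trade-off; it is exactly Csaba's bound (Theorem~\ref{Csaba}) on the largest regular factor guaranteed in a bipartite graph of minimum degree $\alpha' m$, and it enters the argument only through an auxiliary bipartite graph, not through a ``hypergraph analogue of $\reg_{even}$''.

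The paper's argument is structurally different and does not iterate on $\mathcal H$ at all. One chooses $r\approx |E(\mathcal H)|\ln^2 n/m^2$ (with $m=n/(k-\ell)$) random vertex partitions $(A_i,B_i)$ and builds for each the auxiliary bipartite graph $G^{(i)}=G(\mathcal M_{A_i},\mathcal M_{B_i},\mathcal H)$ of Section~\ref{subsec:properties}; Lemma~\ref{key2} gives $\delta(G^{(i)})\ge(\alpha'+\varepsilon/2)m$ w.h.p. Each hyperedge $f$ then randomly commits to one of the partitions that is a candidate for it, which splits the $G^{(i)}$ into edge-disjoint random subgraphs $H_i\subseteq G^{(i)}$, each obtained by retaining edges with probability at least $(1-o(1))/\ln^2 n$. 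Theorem~\ref{Csaba} furnishes each $G^{(i)}$ with an $\lfloor f(\alpha')m\rfloor$-factor, and Theorem~\ref{RandomKaterinis} transfers this to $H_i$ as a $(1-o(1))f(\alpha')m/\ln^2 n$-factor, which (being bipartite) decomposes into that many perfect matchings, i.e.\ edge-disjoint Hamilton $\ell$-cycles of $\mathcal H$. Summing over the $r$ partitions gives the bound. Note in particular that Theorem~\ref{RandomKaterinis} is applied to a balanced bipartite graph whose two sides both have size $m$ and are indexed by pieces of a vertex partition; your proposed bipartite graph between $(k-1)$-sets and their single-vertex extensions has sides of sizes $\binom{n}{k-1}$ and $n$ and does not fit its hypotheses.
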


We remark that we prove Theorem \ref{main2} by translating the
problem into a problem of graphs. One of the ingredients of our
proof is the ability to find a spanning and regular subgraph of a
dense bipartite graph. In order to achieve this goal we use a result
of Csaba \cite{Csaba} (which is tight for bipartite graphs), and
this is where the function $f$ in Theorem \ref{main2} comes from.

In the special case where the difference between
$\Delta_{k-1}(\mathcal H)$ and $\delta_{k-1}(\mathcal H)$ is small,
we obtain the following asymptotically optimal result.

\begin{theorem}\label{main3}

Let $k$ and $\ell$ be integers  satisfying $0\leq\ell< k/2$, and let
$1/2<\alpha\leq 1$ be a constant. For every $\delta>0$ there exists
$\varepsilon>0$ such that the following holds. For all sufficiently
large $n$, if:
\begin{enumerate}[$(i)$]
\item $(k-\ell)|n$, and
\item $\mathcal H$ is a $k$-uniform hypergraph on $n$ vertices, and
\item $\delta_{k-1}(\mathcal
H)\geq \alpha n$, and
\item $\Delta_{k-1}({\cal H})\le (\alpha+\varepsilon)n$.
\end{enumerate}
Then all but at most $\delta\binom{n}{k}$ edges of $\cal{H}$ can be
packed into Hamilton $\ell$-cycles.
\end{theorem}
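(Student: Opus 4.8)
The plan is to recast the packing problem as a near-perfect matching problem in an auxiliary hypergraph and to solve the latter by the semi-random (nibble) method, with the near-regularity hypotheses $(iii)$--$(iv)$ supplying exactly what the nibble needs. Form the auxiliary hypergraph $\mathcal A$ on vertex set $E(\mathcal H)$ whose hyperedges are the edge sets of the Hamilton $\ell$-cycles of $\mathcal H$. Since $\ell<k/2$, in any loose Hamilton $\ell$-cycle two distinct edges are distinct as $k$-sets (consecutive edges meet in $\ell<k-1$ vertices and non-consecutive ones are disjoint), so each Hamilton $\ell$-cycle gives exactly one hyperedge, of size $\tfrac{n}{k-\ell}$; thus $\mathcal A$ is $\tfrac{n}{k-\ell}$-uniform. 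A family of edge-disjoint Hamilton $\ell$-cycles is precisely a matching in $\mathcal A$, so it suffices to find a matching in $\mathcal A$ covering all but a $\delta$-fraction of $V(\mathcal A)=E(\mathcal H)$ (and $|E(\mathcal H)|\le\binom{n}{k}$). By the Pippenger--Spencer theorem (the R\"odl nibble for matchings in hypergraphs) this follows once we check that $\mathcal A$ is almost regular, i.e.\ every $f\in E(\mathcal H)$ lies in $(1\pm o(1))D$ hyperedges of $\mathcal A$ for one common $D$, and has negligible codegree, i.e.\ every pair $f,f'\in E(\mathcal H)$ lies in $o(D)$ common hyperedges.

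For the degree of a vertex $f$ of $\mathcal A$ --- the number of Hamilton $\ell$-cycles of $\mathcal H$ through a prescribed edge $f$ --- I would run the counting argument behind Theorem~\ref{main1} with $f$ installed as one fixed edge of the cycle: the lower bound on the number of extensions comes from $\delta_{k-1}(\mathcal H)\ge\alpha n$ exactly as there, while the hypothesis $\Delta_{k-1}(\mathcal H)\le(\alpha+\varepsilon)n$ forces every intermediate count into the narrow window $(\alpha\pm\varepsilon)n$, so the count through $f$ is pinned down up to a factor $(1+O(\varepsilon))^{\,n/(k-\ell)}$, hence up to $1+o(1)$ after choosing $\varepsilon$ small in terms of $\delta$ --- which is what fixes the quantifier order ``for every $\delta$ there is $\varepsilon$''. (Alternatively, first delete $O(\varepsilon\binom{n}{k})\le\tfrac\delta2\binom{n}{k}$ edges to pass to an almost codegree-regular spanning subhypergraph of codegree $\approx\alpha n$, and count there, where the intermediate counts essentially agree.) For the codegree of $\mathcal A$: forcing a second edge $f'$ into the cycle restricts $f'$ to $O(n)$ positions relative to $f$ and then prescribes all of its vertices, so the number of Hamilton $\ell$-cycles through both $f$ and $f'$ is $O(D/n)=o(D)$ (and it is $0$ unless $f\cup f'$ fits inside the cycle structure). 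Feeding these two facts into Pippenger--Spencer yields a matching of $\mathcal A$ missing at most $\delta|E(\mathcal H)|\le\delta\binom{n}{k}$ vertices, and the corresponding edge-disjoint Hamilton $\ell$-cycles cover all but $\delta\binom{n}{k}$ edges of $\mathcal H$.

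The main obstacle is the almost-regularity of $\mathcal A$. Theorem~\ref{main1} as stated controls only the total number of Hamilton $\ell$-cycles up to a $(1-o(1))^n$ factor, which is far too lossy to compare the counts through two different edges; one must reopen that proof and carry \emph{matching} upper and lower bounds for the per-edge count through all $\tfrac{n}{k-\ell}$ extension steps while keeping the accumulated multiplicative error $1+o(1)$ --- precisely the task for which hypothesis $(iv)$ is designed. An alternative route that avoids a black-box nibble is to iterate Theorem~\ref{main2}: peel off $\Theta(|E(\mathcal H)|)$ edge-disjoint Hamilton $\ell$-cycles at a time, but first pass to a random sub-hypergraph and invoke the bipartite $r$-factor / random-sparsening lemma announced in the abstract so that each peel removes edges in a balanced way, preserving near-regularity --- in particular the codegree lower bound $>n/2$ needed to reapply Theorem~\ref{main2} --- down to codegree $(\tfrac12+o(1))n$, with a short nibble finishing the last $(\tfrac12+o(1))\binom{n}{k}$ edges. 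Either way, keeping the $(k-1)$-codegree sequence under control as cycles are removed is the crux of the argument.
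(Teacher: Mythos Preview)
Your nibble approach is different from the paper's and contains a genuine gap at the decisive step. You write that under hypothesis $(iv)$ the number of Hamilton $\ell$-cycles through a fixed edge $f$ is ``pinned down up to a factor $(1+O(\varepsilon))^{n/(k-\ell)}$, hence up to $1+o(1)$ after choosing $\varepsilon$ small in terms of $\delta$.'' But $(1+O(\varepsilon))^{n/(k-\ell)}$ is \emph{exponential} in $n$ for any fixed $\varepsilon>0$; since $\varepsilon$ is a constant (depending on $\delta$, not on $n$), this ratio blows up rather than tending to $1$. Even with perfectly $(\alpha n)$-regular codegree, the best permanent-type bounds for perfect-matching counts in nearly regular bipartite graphs carry $(1\pm o(1))^m$ slack, so the degrees of $\mathcal A$ at two different vertices can differ by an exponential factor. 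Pippenger--Spencer needs $(1\pm\gamma)$-regularity with $\gamma$ small; you do not have this, and hypothesis $(iv)$ by itself does not supply it. A secondary obstacle you do not address: $\mathcal A$ is $\Theta(n)$-uniform, while the standard nibble is formulated for bounded uniformity, so even granting near-regularity a further argument would be needed.

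The paper avoids counting Hamilton cycles through a fixed edge altogether. It takes $r$ random partitions $(A_i,B_i)$ with their auxiliary bipartite graphs $G^{(i)}$; Remark~\ref{remark1} shows that hypotheses $(iii)$--$(iv)$ make each $G^{(i)}$ nearly $(\alpha m)$-regular. Each edge of $\mathcal H$ then randomly selects one of its candidate partitions, so each $G^{(i)}$ is thinned to a random subgraph $H_i$ with edge-retention probability roughly $1/(rq)$. Theorem~\ref{FactorInAlmostRegular} gives each $G^{(i)}$ an $(\alpha-O(\sqrt\varepsilon))m$-factor, and Theorem~\ref{RandomKaterinis} transfers this to an $(\alpha-O(\sqrt\varepsilon))m/(rq)$-factor in $H_i$, which, being bipartite, splits into perfect matchings and hence into edge-disjoint Hamilton $\ell$-cycles of $\mathcal H_i$. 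Summing over $i$ leaves only an $O(\sqrt\varepsilon)$-fraction of $E(\mathcal H)$ uncovered. The crucial difference from your plan is that the near-regularity needed here lives in the \emph{bipartite graphs} $G^{(i)}$, where hypothesis $(iv)$ translates directly into degree bounds $(\alpha\pm O(\varepsilon))m$ with no exponential amplification, rather than in a hypergraph whose degrees encode global Hamilton-cycle counts.
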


Note that Theorem \ref{main3} is more general than the main result
of \cite{FK} in the sense that we do not require any
``pseudo-random" properties of the hypergraph (except, of course,
the assumption that the degrees are large). On the other hand,
Theorem \ref{main3} works only for hypergraphs which are very dense,
but it is known (see e.g. \cite{KMO}) that below the densities we
consider, there are constructions of hypergraphs without Hamilton
cycles.

In the proofs of Theorems \ref{main2} and \ref{main3} we use (as a
tool) the following theorem which is also of independent interest
and is related to the concept of robustness of graph properties (see for
example \cite{KLS}). Before discussing and stating the theorem, let
us introduce the following notation. Let $G$ be a graph. Given a
positive constant $0<p\leq 1$, we say that a graph $G'$ is
distributed according to $G_p$, or $G'\sim G_p$ for brevity, if $G'$
is a subgraph of $G$ obtained by retaining every edge of $G$ with
probability $p$, independently at random. In the following theorem
we show that, given a bipartite graph $G$ with both parts of size $n$
and with $\delta(G)\geq\alpha n$, where $\alpha>1/2$, if $G$
contains an $r$-factor for $r=\Theta(n)$, then for
$p=\omega\left(\frac{\log n}{n}\right)$, a random subgraph $G'\sim
G_p$ typically contains a $(1-o(1))rp$-factor. The proof of the
 theorem appears in Section \ref{subsec:randomfactor}.
%
\begin{theorem}\label{RandomKaterinis}
Let $1/2<\alpha\leq 1$, $\varepsilon>0$ and $0<\rho\leq \alpha$ be
positive constants. Then for sufficiently large integer $n$, the
following holds. Suppose that:
\begin{enumerate}[$(i)$]
\item $G$ is a bipartite graph with parts $A$ and $B$, both of size $n$,
and
\item $\delta(G)\geq \alpha n$, and
\item $G$ contains a $\rho n$-factor.
\end{enumerate}
Then, for $p= \omega\left(\frac{\ln n}{n}\right)$, with probability
$1-n^{-\omega(1)}$ a graph $G'\sim G_p$ has a $k$-factor for
$k=(1-\varepsilon)\rho np$.
\end{theorem}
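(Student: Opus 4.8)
The plan is to certify the existence of the desired $k$-factor in $G'$ via a Hall/deficiency-type criterion for $f$-factors, namely the Belck--Tutte $f$-factor theorem. Recall that a graph $H$ has an $f$-factor (here $f\equiv k$ constant) if and only if for all disjoint $S,T\subseteq V(H)$,
\[
\sum_{v\in S} f(v) \;+\; \sum_{v\in T}\bigl(d_{H-S}(v)-f(v)\bigr) \;-\; q(S,T)\;\ge\;0,
\]
where $q(S,T)$ counts the ``odd components'' of $H-(S\cup T)$ with respect to the relevant parity condition. For a bipartite graph this simplifies considerably: there are essentially no odd components to worry about beyond a bounded correction, and the condition reduces to a Hall-type inequality. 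So the first step is to reduce, cleanly, to verifying a deficiency inequality of the form $\sum_{a\in S}(d_{G'}(a)-k)\ge k|T| - e_{G'}(S, B\setminus T)$ for all $S\subseteq A$, $T\subseteq B$ (and the symmetric statement), i.e.\ for every $S\subseteq A$, $e_{G'}(S, B) \ge k|S| - (\text{slack})$, which after rearrangement becomes a statement purely about edge counts between sets in $G'$.

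Second, I would import the quantitative content of the deterministic situation. Since $G$ has a $\rho n$-factor, Csaba's theorem (or simply the deterministic $f$-factor criterion applied to $G$ itself) tells us that the edge-count inequalities hold with a genuine $\Theta(n^2)$ surplus: for every $S\subseteq A$ we have $e_G(S,B)\ge \rho n|S|$ with room to spare because $\delta(G)\ge\alpha n>n/2$ forces the bipartite ``expansion'' of every set, i.e.\ $e_G(S, N_G(S))$ is large and $N_G(S)$ is all but a bounded set whenever $|S|>(1-\alpha)n$ roughly. The key structural point coming from $\alpha>1/2$ is that every vertex sees a strict majority on the other side, so the bipartite graph is highly connected and sets cannot be ``starved''; this is exactly what gives uniform $\Omega(n^2)$ slack in the factor inequalities.

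Third — and this is the probabilistic heart — I would show these inequalities are preserved after sparsification by $G_p$. For a fixed pair $(S,T)$, $e_{G'}(S, B\setminus T)$ is a sum of independent Bernoullis with mean $p\cdot e_G(S,B\setminus T)$, and each relevant degree $d_{G'}(a)$ concentrates around $p\,d_G(a)\ge p\alpha n = \omega(\log n)$. A Chernoff bound gives failure probability $e^{-\Omega(p n^2 / \text{poly})}$ or at least $e^{-\omega(n)}$ for each fixed $(S,T)$ once we use that $p=\omega(\log n/n)$ and the slack is linear in $n$ per unit of $|S|$. Then a union bound over the at most $4^n$ choices of $(S,T)$ kills the bad event, provided the per-pair bound beats $4^{-n}$ — which it does because the deterministic slack is $\Omega(n)\cdot\max(|S|,|T|)$ and $pn=\omega(\log n)$, yielding exponent $\omega(n)$. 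One must handle separately the ``small set'' regime, where $|S|$ or $|T|$ is $o(n)$ and the Chernoff exponent is not yet $\omega(n)$: there I would argue directly using $\delta(G)\ge\alpha n$ — a small set $S\subseteq A$ has $e_G(S,B)\ge \alpha n|S|\gg k|S|$ since $k=(1-\varepsilon)\rho np$ is much smaller than $\alpha n$ when we also keep in mind the inequality only needs $e_{G'}(S,B)\gtrsim k|S|$, and concentration of a binomial with mean $p\alpha n|S|=\omega(\log n)\cdot|S|$ suffices after a cruder union bound over $\binom{n}{|S|}\le n^{|S|}$ sets. Finally one assembles the two regimes, checks the parity/odd-component correction terms are negligible (bounded, hence absorbed by the linear slack), and concludes via the $f$-factor theorem that $G'$ has a $k$-factor w.h.p., with the stated probability $1-n^{-\omega(1)}$ coming from the $p=\omega(\log n/n)$ bound in the small-set union bound.

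The main obstacle I anticipate is the small-set regime and the precise bookkeeping of the deficiency inequality: getting the right form of the $f$-factor criterion for (near-)bipartite graphs with the odd-component term under control, and making sure the union bound over all $(S,T)$ — especially the exponentially many small ones — is dominated by the concentration bound given only $p=\omega(\log n/n)$ rather than $p\ge \text{polylog}/n$ with a large constant. Everything else is a fairly standard ``deterministic slack plus Chernoff plus union bound'' robustness argument.
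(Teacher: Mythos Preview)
Your proposal is correct and follows essentially the same strategy as the paper: verify a factor criterion by showing the relevant edge-count inequalities survive sparsification via Chernoff plus a union bound, with separate treatment of small and large sets. The one substantive simplification you are missing is that for \emph{balanced bipartite} graphs the appropriate criterion is the Gale--Ryser condition (the paper's Proposition~\ref{Pr1}): $G$ has an $r$-factor iff $r|X|\le e_G(X,Y)+r(n-|Y|)$ for all $X\subseteq A$, $Y\subseteq B$. This is exactly the ``clean Hall-type inequality'' you were hoping the Belck--Tutte criterion would reduce to, with no odd-component term whatsoever, so your anticipated obstacle about controlling $q(S,T)$ simply does not arise. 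The paper then splits into three cases: $|X|+|Y|\le n$ (trivial, since $n-|Y|\ge|X|$); $|X|+|Y|>n$ with $|X|\le n/\ln\ln n$ (your small-set regime, handled exactly as you outline using $e_G(X,Y)\ge(1-o(1))\alpha n|X|$ and a union bound over $\binom{n}{x}^2$ pairs); and $|X|+|Y|>n$ with $|X|>n/\ln\ln n$ (your large-set regime, using $e_G(X,Y)\ge\max\{x,y\}\cdot(\alpha-1/2)n=\Theta(n^2)$ to beat the $4^n$ union bound, then invoking the deterministic Gale--Ryser inequality for the $\rho n$-factor in $G$ and multiplying through by $(1-\varepsilon)p$).
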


\begin{remark}\label{remark:notsameprobability}
We remark that the proof of Theorem \ref{RandomKaterinis} is still
valid even if we choose each edge $e\in E(G)$ with probability
$p_e\geq p$. This follows from the monotonicity of the random model $G_p$.
\end{remark}

Let $\mathcal H$ be a $k$-uniform hypergraph on $n$ vertices with
$\delta_{k-1}(\mathcal H)\geq \alpha n$ for some $\alpha>1/2$.
Assume further that $k\mid n$. Now, by applying Theorem \ref{main2}
with $\ell=0$ to $\mathcal H$ one can obtain that $\mathcal H$
contains an $r$-factor for every $r\leq (1-o(1))\frac{f(\alpha)
|E(H)|}{\frac{n}{k}}$. In the following proposition, by slightly
extending a known construction, we show that there are hypergraphs
$\mathcal H$ with $\delta_{k-1}(\mathcal H)\geq n/2-O(1)$ which do
not contain $r$-factors for many values of $r$.

\begin{proposition} \label{main4}
Let $k\leq n$ be positive integers. Then there exists a $k$-uniform
hypergraph $\mathcal H$ on $n$ vertices with $\delta_{k-1}(\mathcal
H)\geq n/2-k-1$, which does not contain an $r$-factor for any odd
integer $r$.
\end{proposition}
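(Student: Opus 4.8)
The plan is to exhibit a parity-based extremal construction, in the spirit of the Markström--Ruci\'nski example that shows $\delta_{k-1}(\mathcal H)\approx n/2$ is necessary for a perfect matching; the only new ingredient is the observation that an $r$-factor with $r$ odd carries exactly the same parity obstruction as a single perfect matching, so the argument handles all odd $r$ at once and needs no divisibility bookkeeping. First a trivial reduction: if $n\le 2k+1$ then $n/2-k-1<0$, the degree hypothesis is vacuous, and the edgeless $k$-uniform hypergraph on $n$ vertices does the job (it has no $r$-factor for any $r\ge 1$). So I may assume $n\ge 2k+2$.

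For the main case, partition $V(\mathcal H)=A\cup B$, where $a:=|A|$ is chosen to be an \emph{odd} integer with $|a-n/2|\le 1$; such an $a$ exists because among two suitable consecutive integers near $n/2$ exactly one is odd, and then $a,\,b\ge n/2-1\ge k$, where $b:=|B|=n-a$. Let $\mathcal H$ be the $k$-uniform hypergraph whose edges are precisely the $k$-subsets $e\subseteq V(\mathcal H)$ with $|e\cap A|$ \emph{even} (with $0$ counted as even). To verify the degree condition, fix a $(k-1)$-set $X$ and put $j=|X\cap A|$. A vertex $v\notin X$ completes $X$ to an edge of $\mathcal H$ exactly when this keeps $|e\cap A|$ even, which forces $v\in B$ if $j$ is even and $v\in A$ if $j$ is odd; these leave $b-(k-1-j)\ge b-(k-1)$ and $a-j\ge a-(k-1)$ choices respectively, and both configurations are realizable since $a,b\ge k$. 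Hence $d_{\mathcal H}(X)\ge \min(a,b)-(k-1)\ge (n/2-1)-(k-1)=n/2-k$, so $\delta_{k-1}(\mathcal H)\ge n/2-k\ge n/2-k-1$, as required.

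It remains to rule out $r$-factors for odd $r$. Suppose $F\subseteq E(\mathcal H)$ were an $r$-factor, i.e.\ every vertex of $V(\mathcal H)$ lies in exactly $r$ edges of $F$. Counting incident pairs $(v,e)$ with $v\in A$, $e\in F$, $v\in e$ in two ways gives $r|A|=\sum_{e\in F}|e\cap A|$. The right-hand side is a sum of even numbers, hence even, whereas $r|A|=ra$ is odd when both $r$ and $a$ are odd --- a contradiction. Therefore $\mathcal H$ contains no $r$-factor for any odd $r$, which proves the proposition.

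As for the difficulty: there is no real obstacle here. The whole content is the two-line identity $\sum_{e\in F}|e\cap A|=r|A|$ together with the fact that every edge of $\mathcal H$ meets $A$ in an even number of vertices; the only points needing a little care are choosing $a$ with the right parity while keeping $\min(a,b)$ within $1$ of $n/2$, checking that the extension configurations in the degree count are actually realizable (which is why one wants $a,b\ge k$), and peeling off the small-$n$ range where the claimed degree bound is negative and the edgeless hypergraph suffices.
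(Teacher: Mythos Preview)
Your proof is correct and follows essentially the same approach as the paper: the same bipartition with $|A|$ odd near $n/2$, the same ``edges meet $A$ evenly'' construction, and the same parity contradiction via $\sum_{e\in F}|e\cap A|=r|A|$ (the paper phrases this last step as a handshake argument in the induced multi-hypergraph on $A$, which is the identical computation). Your write-up is in fact more careful than the paper's in two respects --- you explicitly handle the small-$n$ range where the degree bound is vacuous, and you verify the inequality $\delta_{k-1}(\mathcal H)\ge n/2-k$ rather than merely asserting it --- but there is no substantive difference in strategy.
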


\section{Tools}

In this section we introduce the main tools to be used in the proofs
of our results.

%
%

\subsection{Probabilistic tools}

We need to employ standard bounds on large deviations of random
variables. We mostly use the following well-known bound on the lower
and the upper tails of the Binomial distribution due to Chernoff
(see \cite{AS}, \cite{JLR}).

\begin{lemma}\label{Che}
Let $X \sim \emph{\text{Bin}}(n,p)$ and let $\mu=\mathbb{E}(X)$.
Then
\begin{itemize}
    \item $\Pr\left[X<(1-a)\mu\right]<e^{-a^2\mu/2}$ for every
    $a>0$;
    \item $\Pr\left[X>(1+a)\mu\right]<e^{-a^2\mu/3}$ for every $0<a<3/2.$
\end{itemize}
\end{lemma}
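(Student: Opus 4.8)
The plan is to prove both tail inequalities by the standard exponential-moment (Bernstein) method, keeping the two estimates symmetric until the last step. First I would write $X=\sum_{i=1}^n X_i$ with $X_i$ independent Bernoulli variables, $\Pr[X_i=1]=p$, so that $\mu=np$. For any $t\in\mathbb{R}$, independence gives $\mathbb{E}[e^{tX}]=\prod_{i=1}^n\mathbb{E}[e^{tX_i}]=(1+p(e^t-1))^n$, and the elementary inequality $1+x\le e^x$, applied factorwise, yields the clean bound $\mathbb{E}[e^{tX}]\le e^{\mu(e^t-1)}$ valid for every $t$.

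For the upper tail I would fix $t>0$ and apply Markov's inequality to the nonnegative variable $e^{tX}$: since $\{X>(1+a)\mu\}$ forces $e^{tX}>e^{t(1+a)\mu}$, we get $\Pr[X>(1+a)\mu]\le e^{-t(1+a)\mu}\mathbb{E}[e^{tX}]\le\exp\bigl(\mu(e^t-1-t(1+a))\bigr)$. Minimizing the exponent at $t=\ln(1+a)>0$ gives $\Pr[X>(1+a)\mu]\le\exp\bigl(\mu(a-(1+a)\ln(1+a))\bigr)$. Symmetrically, for the lower tail I would apply Markov to $e^{-tX}$ with $t>0$, obtaining $\Pr[X<(1-a)\mu]\le\exp\bigl(\mu(e^{-t}-1+t(1-a))\bigr)$, which for $0<a<1$ is minimized at $t=-\ln(1-a)>0$ and yields $\Pr[X<(1-a)\mu]\le\exp\bigl(\mu(-a-(1-a)\ln(1-a))\bigr)$. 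For $a\ge 1$ the event $\{X<(1-a)\mu\}$ is empty since $X\ge 0$, so the claimed bound holds trivially there.

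It then remains to reduce these exact exponential bounds to the stated Gaussian-type forms by verifying two scalar inequalities: $-a-(1-a)\ln(1-a)\le -a^2/2$ for the lower tail and $a-(1+a)\ln(1+a)\le -a^2/3$ for the upper tail. The lower-tail inequality is the cleaner of the two: setting $\phi(a)=(1-a)\ln(1-a)+a-a^2/2$, I would check $\phi(0)=\phi'(0)=0$ and $\phi''(a)=a/(1-a)\ge 0$ on $[0,1)$, so $\phi$ is convex and hence nonnegative, which is exactly what is needed.

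The main obstacle is the upper-tail scalar inequality, where the hypothesis $a<3/2$ becomes essential and the estimate is nearly tight. Setting $\psi(a)=(1+a)\ln(1+a)-a-a^2/3$, I would compute $\psi(0)=\psi'(0)=0$ and $\psi''(a)=\frac{1}{1+a}-\frac23$, which is positive for $a<1/2$ and negative for $a>1/2$; hence $\psi$ increases near $0$ and is eventually decreasing, so its minimum over $[0,3/2]$ is attained at an endpoint. Since $\psi(0)=0$ and a direct evaluation gives $\psi(3/2)=\frac52\ln\frac52-\frac94>0$, I conclude $\psi\ge 0$ on the whole interval, which establishes $a-(1+a)\ln(1+a)\le -a^2/3$ for $0<a<3/2$ and completes the proof.
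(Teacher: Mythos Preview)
Your proof is correct. The exponential--moment argument, the optimization in $t$, and the two scalar calculus lemmas are all carried out accurately; in particular your endpoint analysis of $\psi$ on $[0,3/2]$ is the right way to handle the near-tight upper-tail constant, and the numerical check $\psi(3/2)=\tfrac52\ln\tfrac52-\tfrac94>0$ is valid.

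As for comparison with the paper: there is nothing to compare. The paper does not prove Lemma~\ref{Che} at all; it quotes it as a standard Chernoff bound with references to Alon--Spencer and Janson--\L uczak--Ruci\'nski. Your write-up therefore supplies strictly more than the paper does. One tiny remark: your argument as written yields $\le$ rather than the strict $<$ stated in the lemma; if you care about matching the strict inequality, note that $1+x<e^x$ for $x\ne 0$ makes the moment-generating-function bound strict whenever $0<p<1$ and $t\ne 0$, and the degenerate cases $p\in\{0,1\}$ are immediate.
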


\emph{Remark: The conclusions of Lemma \ref{Che} remain the same
when $X$ has the hypergeometric distribution (see \cite{JLR},
Theorem 2.10).}

\noindent The following is a trivial yet useful bound.
\begin{lemma}\label{Che2}
Let $X \sim \emph{\Bin}(n,p)$ and $k \in \mathbb{N}$.Then the
following holds: $$\Pr(X\geq k) \leq \left(\frac{enp}{k}\right)^k.$$
\end{lemma}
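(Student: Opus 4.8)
The plan is to control the upper tail by a crude first-moment (union bound) argument rather than by exponential moments — note that Lemma~\ref{Che} is of no use in the regime $k\gg np$ that the statement targets, since it only bounds deviations up to a constant factor of the mean. Write $X=\sum_{i=1}^{n}\mathbf 1_{A_i}$, where $A_1,\dots,A_n$ are independent events each of probability $p$. The event $\{X\ge k\}$ holds exactly when at least $k$ of the $A_i$ occur, so it is contained in (indeed equals) $\bigcup_{I}\bigcap_{i\in I}A_i$, the union over all $k$-element subsets $I\subseteq[n]$. By the union bound and independence,
$$\Pr[X\ge k]\ \le\ \sum_{I\in\binom{[n]}{k}}\Pr\Big[\bigcap_{i\in I}A_i\Big]\ =\ \binom nk p^{k}.$$
(Equivalently, $\binom Xk$ is a nonnegative integer which is $\ge1$ precisely on $\{X\ge k\}$ and has mean $\binom nk p^{k}$, since it counts $k$-subsets of successes; Markov's inequality gives the same estimate. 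If $k>n$ the left side is $0$ and there is nothing to prove, so assume $1\le k\le n$.)

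The remaining step is to replace $\binom nk$ by $(en/k)^{k}$, which is the standard estimate $\binom nk\le n^{k}/k!\le(en/k)^{k}$, the last inequality being $k!\ge(k/e)^{k}$ (itself immediate from $e^{k}=\sum_{j\ge0}k^{j}/j!\ge k^{k}/k!$). Multiplying by $p^{k}$ yields $\Pr[X\ge k]\le(en/k)^{k}p^{k}=(enp/k)^{k}$, as claimed. There is essentially no obstacle: the bound is deliberately loose (it improves on the trivial bound $1$ only once $k>enp$), and the sole non-tautological ingredient is the binomial-coefficient inequality above; the only thing to watch is the degenerate range $k>n$ (and, if $0\in\mathbb N$ in the paper's convention, $k=0$, for which the right-hand side is not well defined, so tacitly $k\ge1$).
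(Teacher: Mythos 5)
Your proof is correct and is the same argument the paper gives in its one-line proof: the union bound over $k$-subsets of successes yields $\Pr(X\ge k)\le\binom{n}{k}p^k$, followed by the standard estimate $\binom{n}{k}\le(en/k)^k$. You have merely spelled out the details (and the degenerate cases) more fully.
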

\begin{proof}
$\Pr(X \geq k) \leq \binom{n}{k}p^k \leq
\left(\frac{enp}{k}\right)^k$.
\end{proof}

\bigskip We also make an extensive use of the following inequality,
whose proof can be found at \cite{Mac}, Section 3.2.

\begin{theorem} \label{talagrand}
Let $S_n$ denote the set of permutations of $[n]$ and let
$f:S_n\rightarrow \mathbb{R}$ be such that $|f(\pi)-f(\pi')|\leq u$
whenever $\pi'$ is obtained from $\pi$ by transposing two elements.
Then if $\pi$ is chosen randomly from $S_n$ then

$$\Pr\left[|f(\pi)-\mathbb{E}(f)|\geq t\right]\leq
2\exp\left(-\frac{2t^2}{nu^2}\right).$$
\end{theorem}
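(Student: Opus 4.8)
The plan is to prove this via the method of bounded martingale differences (a Doob/Azuma argument), being careful to extract the sharp constant $2$ in the exponent through Hoeffding's lemma rather than the cruder Azuma--Hoeffding bound. I would reveal the random permutation $\pi$ one value at a time: let $\mathcal F_i$ denote the $\sigma$-algebra generated by $\pi(1), \dots, \pi(i)$ for $0 \le i \le n$, so that $\mathcal F_0$ is trivial and $\mathcal F_n$ determines $\pi$. Define the Doob martingale $X_i = \mathbb E[f(\pi) \mid \mathcal F_i]$, so that $X_0 = \mathbb E(f)$ and $X_n = f(\pi)$, and write $f(\pi) - \mathbb E(f) = \sum_{i=1}^{n} (X_i - X_{i-1})$.

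The crux is to control the \emph{range} of each increment, not merely its absolute value. Fix a realization of the prefix $\pi(1) = p_1, \dots, \pi(i-1) = p_{i-1}$, let $V = [n] \setminus \{p_1, \dots, p_{i-1}\}$ be the set of values still available for position $i$, and for $a \in V$ set $g(a) = \mathbb E[f(\pi) \mid \mathcal F_{i-1}, \pi(i) = a]$. I claim that $\max_{a \in V} g(a) - \min_{a \in V} g(a) \le u$. To see this, fix $a, b \in V$ and build a measure-preserving bijection between completions with $\pi(i) = a$ and those with $\pi(i) = b$: given a full permutation $\tau$ extending the prefix with $\tau(i) = a$, let $m$ be the position with $\tau(m) = b$ and let $\tau'$ be obtained from $\tau$ by transposing the entries in positions $i$ and $m$. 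Then $\tau'$ extends the same prefix, satisfies $\tau'(i) = b$, and is a single transposition of $\tau$, so $|f(\tau) - f(\tau')| \le u$ by hypothesis. Averaging over uniform completions gives $|g(a) - g(b)| \le u$, establishing the claim. Consequently, conditionally on $\mathcal F_{i-1}$, the increment $X_i - X_{i-1} = g(\pi(i)) - \mathbb E[g(\pi(i)) \mid \mathcal F_{i-1}]$ has conditional mean zero and takes values in an interval of length at most $u$.

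With the range bound in hand I would apply Hoeffding's lemma to each conditional increment: any mean-zero random variable supported on an interval of length $u$ satisfies $\mathbb E[e^{\lambda (X_i - X_{i-1})} \mid \mathcal F_{i-1}] \le e^{\lambda^2 u^2 / 8}$. Peeling off the factors one at a time using the tower property yields $\mathbb E[e^{\lambda(f(\pi) - \mathbb E f)}] \le e^{n \lambda^2 u^2/8}$ for every $\lambda > 0$. Markov's inequality then gives $\Pr[f(\pi) - \mathbb E f \ge t] \le \exp(-\lambda t + n\lambda^2 u^2/8)$, and optimizing at $\lambda = 4t/(nu^2)$ produces the bound $\exp(-2t^2/(nu^2))$. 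Applying the same estimate to $-f$ and taking a union bound over the two tails yields the stated two-sided inequality with the factor $2$ in front.

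The main obstacle is exactly the range estimate on the increments: the naive Doob/Azuma bound gives only $|X_i - X_{i-1}| \le u$ and hence the weaker exponent $t^2/(2nu^2)$. The improvement to $2t^2/(nu^2)$ hinges on showing that the conditional increment lives in an interval of length $u$, so that the factor $1/8$ in Hoeffding's lemma (rather than $1/2$) governs the conditional moment generating function, and this is precisely what the transposition coupling between the values $a$ and $b$ at position $i$ delivers. One minor technical point worth tracking is that the increment for $i = n$ vanishes, since the prefix already determines $\pi$; this only helps, and using $n$ as the crude count of nonzero increments is harmless.
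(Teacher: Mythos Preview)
Your argument is correct and is essentially the standard bounded-differences proof via the Doob martingale and Hoeffding's lemma; the transposition coupling you use to bound the conditional range of each increment by $u$ is exactly the right idea, and the optimization is carried out correctly. Note, however, that the paper does not actually prove this theorem: it is quoted as a tool with a reference to McDiarmid's survey \cite{Mac}, Section~3.2, and the proof there is precisely the martingale argument you have written, so there is no substantive difference to report.
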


\subsection{Factors in graphs}

In the proofs of our main results we translate the problem from
hypergraphs to graphs by introducing some auxiliary graphs and then
by trying to find large factors in each such graph. For this goal we
will make use of the following theorem due to Csaba \cite{Csaba}.

\begin{theorem}\label{Csaba}
Let $G=(A\cup B,E)$ be a bipartite graph with parts of size $n$ and
with minimum degree $\delta(G)\geq n/2$. Then $G$ contains a
$\lfloor \rho n \rfloor $-factor for
$\rho=\frac{\delta+\sqrt{2\delta-1}}{2}$, where
$\delta:=\delta(G)/n$.
\end{theorem}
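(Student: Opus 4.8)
The plan is to translate the existence of a $k$-factor in $G'$ into a family of cut inequalities and then verify those inequalities with very high probability by a union bound, using the two hypotheses of the theorem to control two different features of the estimate.

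\textbf{Reduction to a cut condition.} First I would recall the flow/min-cut (Gale--Ryser type) characterization of $k$-factors in bipartite graphs: a bipartite graph $H$ with parts $A,B$ of size $n$ has a $k$-factor if and only if $e_H(S,T)\ge k(|S|+|T|-n)$ for every $S\subseteq A$ and $T\subseteq B$. One sees this by modelling a $k$-factor as an integral $s$--$t$ flow of value $kn$, with capacity-$k$ edges from the source into $A$, unit-capacity edges along $E(H)$, and capacity-$k$ edges from $B$ to the sink; the displayed inequalities are exactly the statement that every cut has value at least $kn$. Writing $k=(1-\varepsilon)\rho np$ (rounded down, the rounding absorbed into $\varepsilon$), it therefore suffices to show that with probability $1-n^{-\omega(1)}$ the random graph $G'\sim G_p$ satisfies $e_{G'}(S,T)\ge k(|S|+|T|-n)$ for all $S,T$. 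Only the pairs with $s+t>n$ (where $s=|S|,\ t=|T|$) impose a constraint; I set $d:=s+t-n\ge 1$.

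\textbf{Two deterministic lower bounds.} For each such pair I would record two lower bounds on $e_G(S,T)$ in the \emph{unsparsified} graph. From the $\rho n$-factor $F\subseteq G$, summing degrees gives $e_F(S,T)\ge \rho n\,s-\rho n(n-t)=\rho n\,d$, hence $e_G(S,T)\ge\rho n\,d$. From $\delta(G)\ge\alpha n$ with $\alpha>1/2$, each $b\in T$ has at least $s-(1-\alpha)n$ neighbours in $S$ and each $a\in S$ at least $t-(1-\alpha)n$ neighbours in $T$, so $e_G(S,T)\ge\max\{t(s-(1-\alpha)n),\,s(t-(1-\alpha)n)\}$. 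Since the two deficits sum to $(s-(1-\alpha)n)+(t-(1-\alpha)n)=(2\alpha-1)n+d\ge(2\alpha-1)n$, one of them is at least $(2\alpha-1)n/2$; multiplying by the corresponding factor ($t$ or $s$, each at least $\min(s,t)$) yields the clean bound $e_G(S,T)\ge\frac{2\alpha-1}{2}\,n\min(s,t)$. The first bound pins down the correct \emph{mean}; the second supplies the \emph{concentration strength}.

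\textbf{Concentration and union bound.} For fixed $S,T$ the variable $e_{G'}(S,T)$ is distributed as $\mathrm{Bin}(e_G(S,T),p)$, with mean $\mu=p\,e_G(S,T)$. The target obeys $kd=(1-\varepsilon)\rho np\,d\le(1-\varepsilon)p\,e_G(S,T)=(1-\varepsilon)\mu$ by the first bound, so Chernoff's lower tail (Lemma~\ref{Che}) gives $\Pr[e_{G'}(S,T)<kd]\le e^{-\varepsilon^2\mu/2}\le\exp\!\big(-c\,pn\min(s,t)\big)$, with $c=\varepsilon^2(2\alpha-1)/4$, by the second bound. To sum over pairs I would group them by $m:=\min(s,t)\ge 1$. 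Writing $a=\min(s,n-s)$ and $b=\min(t,n-t)$, the identities $n-s=t-d$ and $n-t=s-d$ give $a+b\le 2\min(s,t)=2m$, so $\binom{n}{s}\binom{n}{t}=\binom{n}{a}\binom{n}{b}\le n^{a+b}\le n^{2m}$, while at most $2n$ size-pairs have $\min(s,t)=m$. Hence the total failure probability is at most $\sum_{m\ge 1}2n\cdot n^{2m}\exp(-cpnm)=2n\sum_{m\ge 1}\exp\!\big(m(2\ln n-cpn)\big)$, a geometric series with ratio $e^{-\Theta(pn)}$ since $pn=\omega(\log n)$, which sums to $n^{-\omega(1)}$. (By the monotonicity noted in Remark~\ref{remark:notsameprobability} I may assume each edge is kept with probability exactly $p$.)

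\textbf{Main obstacle.} The delicate point is exactly that $p$ is only $\omega(\log n/n)$, so per-pair concentration is weak even though there are up to $4^n$ pairs; this is where the two hypotheses must be used together. The factor assumption alone would give a Chernoff exponent of order $p\rho n\,d$, which is hopeless for pairs with $d=O(1)$ but exponentially many choices of $(S,T)$. The minimum-degree bound $e_G(S,T)\ge\frac{2\alpha-1}{2}n\min(s,t)$ is what upgrades the exponent to order $pn\min(s,t)=\omega(\min(s,t)\log n)$, dominating the entropy $O(m\log n)$ read off from $\binom{n}{s}\binom{n}{t}\le n^{2m}$. It is here that $\alpha>1/2$ is indispensable: it is precisely what makes the deficit sum $(2\alpha-1)n+d$ positive and linear in $n$, forcing $e_G(S,T)$ up to the quadratic scale; the $\rho n$-factor then only serves to calibrate the correct target degree. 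The passage from the real number $(1-\varepsilon)\rho np$ to an integer $k$, and the final extraction of the $k$-factor from the verified cut condition, are routine and absorbed into the slack $\varepsilon$.
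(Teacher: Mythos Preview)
Your proposal does not address the stated theorem. Theorem~\ref{Csaba} is a \emph{deterministic} assertion: a bipartite graph with parts of size $n$ and minimum degree $\delta n\ge n/2$ contains a $\lfloor\rho n\rfloor$-factor with $\rho=\tfrac{\delta+\sqrt{2\delta-1}}{2}$. There is no random sparsification, no parameter $p$, and no probabilistic conclusion. What you have written is instead a proof of Theorem~\ref{RandomKaterinis}: you introduce $G'\sim G_p$, set $k=(1-\varepsilon)\rho np$, assume a pre-existing $\rho n$-factor as a hypothesis, invoke Chernoff bounds, and conclude with probability $1-n^{-\omega(1)}$. None of this machinery is relevant to Theorem~\ref{Csaba}, which has no randomness and does \emph{not} assume a factor --- deriving the specific value $\rho=\tfrac{\delta+\sqrt{2\delta-1}}{2}$ is the entire content, and your argument never produces or uses this expression.

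Note also that the paper does not prove Theorem~\ref{Csaba}; it is quoted as a black box from Csaba~\cite{Csaba} and used as an input to Theorem~\ref{RandomKaterinis} (via Theorem~\ref{main2}). So there is no ``paper's own proof'' to compare against for this statement. If your intention was actually to prove Theorem~\ref{RandomKaterinis}, your approach is essentially the same as the paper's --- both reduce to the Gale--Ryser cut condition (Proposition~\ref{Pr1}), bound $e_G(S,T)$ from below using the minimum-degree hypothesis to get a Chernoff exponent that beats the entropy of the union bound, and use the $\rho n$-factor hypothesis to calibrate the target --- though your organization by $m=\min(s,t)$ with the entropy bound $\binom{n}{s}\binom{n}{t}\le n^{2m}$ is a bit cleaner than the paper's three-case split.
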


In case the graph is almost regular, a better bound can be obtained
as stated in the following theorem:

\begin{theorem}\label{FactorInAlmostRegular}
For every $\alpha>1/2$ there exist $\varepsilon_0>0$ and an integer
$n_0=n(\alpha)$ such that for every $n\geq n_0$ and $\varepsilon\leq
\varepsilon_0$ the following holds. Suppose that:
\begin{enumerate}[$(i)$]
\item $G$ is a bipartite graph with two parts $A$ and $B$ of size
$n$, and
\item $d_G(v)\in (\alpha\pm \varepsilon)n$ for every $v\in V(G)$.
\end{enumerate}
Then, for every $r\leq (\alpha-10\sqrt{\varepsilon})n$, $G$ contains
an $r$-factor.
\end{theorem}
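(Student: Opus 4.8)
The plan is to reduce the existence of an $r$-factor to a single covering inequality via max-flow/min-cut, and then verify that inequality using only the near-regularity of $G$. Recall the standard flow characterization of $r$-factors in bipartite graphs: $G=(A\cup B,E)$ with $|A|=|B|=n$ has an $r$-factor if and only if $e_G(X,B\setminus Y)\ge r\bigl(|X|-|Y|\bigr)$ for all $X\subseteq A$ and $Y\subseteq B$. Indeed, take the network with a source $s$ joined to each $a\in A$ by an arc of capacity $r$, each edge of $G$ oriented from $A$ to $B$ with capacity $1$, and each $b\in B$ joined to a sink $t$ by an arc of capacity $r$; an integral maximum flow has value $rn$ precisely when $G$ contains an $r$-factor, and writing the capacity of the cut with source side $S$ in terms of $X=A\cap S$, $Y=B\cap S$ one sees via max-flow/min-cut that value $rn$ is attained iff the displayed inequality holds for all $X,Y$. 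The inequality is trivial when $|X|\le|Y|$ and is monotone decreasing in $r$, so it suffices to check it for the single integer $r^{*}:=\lfloor(\alpha-10\sqrt\varepsilon)n\rfloor$ and for $x:=|X|>y:=|Y|$; every smaller $r$ then follows (either by monotonicity, or by decomposing the $r^{*}$-regular bipartite factor into $r^{*}$ perfect matchings via K\"onig's theorem and keeping $r$ of them).

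Writing $t:=x-y\ge1$, I will use three elementary lower bounds: $(1)$ $e_G(X,B\setminus Y)=\sum_{a\in X}\bigl(d_G(a)-d_G(a,Y)\bigr)\ge x\bigl((\alpha-\varepsilon)n-y\bigr)$; $(2)$ $e_G(X,B\setminus Y)=\sum_{b\in B\setminus Y}\bigl(d_G(b)-d_G(b,A\setminus X)\bigr)\ge (n-y)\bigl(x-(1-\alpha+\varepsilon)n\bigr)$; and $(3)$ $e_G(X,B\setminus Y)=\sum_{a\in X}d_G(a)-e_G(X,Y)\ge x(\alpha-\varepsilon)n-y(\alpha+\varepsilon)n=\alpha n\,t-\varepsilon n(x+y)$, the last step using both the lower and the upper degree bound. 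Now split on the balance of $(X,Y)$. If $t\ge\tfrac{\sqrt\varepsilon}{10}(x+y)$, then $(3)$ already gives $e_G(X,B\setminus Y)\ge\alpha n\,t-10\sqrt\varepsilon\,n\,t\ge r^{*}t$. Otherwise $t<\tfrac{\sqrt\varepsilon}{10}(x+y)$, which (as $x+y=2y+t$) forces $t<\tfrac{\sqrt\varepsilon}{4}y\le\tfrac{\sqrt\varepsilon}{4}n$, so $r^{*}t$ is tiny. In this regime: if $y\le n/2$, bound $(1)$ gives $e_G(X,B\setminus Y)\ge y\bigl((\alpha-\varepsilon)n-y\bigr)\ge y(\alpha-\tfrac12-\varepsilon)n$, which beats $r^{*}t$ once $\varepsilon\le\varepsilon_0(\alpha)$; if $y>n/2$ then $x>n/2>(1-\alpha+\varepsilon)n$, so $(2)$ applies, and writing $p:=n-x\in[0,n/2)$ it reads $e_G(X,B\setminus Y)\ge(p+t)\bigl((\alpha-\varepsilon)n-p\bigr)=t\bigl((\alpha-\varepsilon)n-p\bigr)+p\bigl((\alpha-\varepsilon)n-p\bigr)$. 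When $p\le(10\sqrt\varepsilon-\varepsilon)n$ the first summand alone is $\ge tr^{*}$; when $p>(10\sqrt\varepsilon-\varepsilon)n$ the second summand is a concave function of $p$ on $\bigl((10\sqrt\varepsilon-\varepsilon)n,\,n/2\bigr)$, hence at least $\min\{(10\sqrt\varepsilon-\varepsilon)(\alpha-10\sqrt\varepsilon)n^{2},\ \tfrac14(2\alpha-1-2\varepsilon)n^{2}\}$, which again dominates $r^{*}t$ for $\varepsilon\le\varepsilon_0(\alpha)$.

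The only genuinely delicate part is this second regime: bounds $(1)$ and $(2)$ are individually weak in the moderately-unbalanced ranges of $(x,y)$, so one has to use the right bound on the right sub-range and avoid crude estimates — the slack in the constant $10$ is exactly what absorbs the gap between $\varepsilon$ and $\sqrt\varepsilon$, and the dependence $\varepsilon_0=\varepsilon_0(\alpha)$ is what makes the positive quantities $\alpha-\tfrac12$ and $2\alpha-1$ dominate the $O(\sqrt\varepsilon)$ error terms (in particular $\varepsilon_0$ must be small enough that $\alpha-\varepsilon>\tfrac12>1-\alpha+\varepsilon$ and $10\sqrt\varepsilon<\tfrac12$). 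Once the cut condition is established for $r^{*}$, the theorem follows.
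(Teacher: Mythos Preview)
Your argument is correct and, in fact, more self-contained than the paper's. The paper does not prove the theorem directly: it observes the standard correspondence between balanced bipartite graphs and digraphs on $n$ vertices (with loops), and then simply invokes Lemmas 13.2 and 5.2 of K\"uhn--Osthus \cite{KO} to conclude. So the heavy lifting is outsourced to a rather substantial external result on regular expanders.

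Your route is genuinely different: you stay entirely in the bipartite setting and verify the Gale--Ryser cut condition (the paper's own Proposition~\ref{Pr1}, in its equivalent form $e_G(X,B\setminus Y)\ge r(|X|-|Y|)$) by a direct case split. The key idea --- using bound (3), which exploits \emph{both} the upper and lower degree hypotheses, to handle the ``unbalanced'' pairs $t\ge\tfrac{\sqrt\varepsilon}{10}(x+y)$, and then showing that for ``near-balanced'' pairs the target $r^*t$ is only $O(\sqrt\varepsilon\,n^2)$ and is beaten by the quadratic lower bounds (1) or (2) --- is clean, and your sub-case analysis in the $y>n/2$ regime (splitting on $p$ and using concavity) is correct. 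The monotonicity/K\"onig remark reducing to $r^*$ is also fine. What your approach buys is an elementary, citation-free proof that makes the role of the constant $10\sqrt\varepsilon$ transparent; what the paper's approach buys is brevity, at the cost of depending on the machinery of \cite{KO}.
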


\begin{proof}[Sketch] Before we sketch the proof, note that there exists a standard bijection between bipartite graphs with parts of size $n$ and digraphs (self loops are allowed!) on $n$ vertices.
For showing it, assume that $G=(A\cup B,E)$ is a bipartite graph
with $|A|=|B|=n$, and define a digraph $D=(A,E')$ as follows (we may
assume that $A=B$): the arc $ab\in E'$ if and only if the
corresponding edge appears in $G$. For the other direction, assume
that $D=(V,E)$ is a digraph. Define $G=(A\cup B,E')$ as follows: the
parts $A$ and $B$ are two copies of $V$. An edge $ab\in E'$ if and
only if the arc $ab\in E$. Now, note that by deleting at most one
edge adjacent to each vertex one can delete all loops and the proof
of Theorem \ref{FactorInAlmostRegular} follows immediately by
combining Lemmas 13.2 and 5.2 of \cite{KO}.
\end{proof}

In addition, we make use of the following theorem due to Cuckler and
Kahn, which provides a good lower bound on the number of perfect
matchings in a bipartite graph with respect to the minimum degree
(see \cite{CK}, p.3).

\begin{theorem}\label{CK}
Let $G$ be a bipartite graph with both parts of size $m$, and let
$\delta(G)=\delta m\geq m/2$ be its minimum degree. Then the number
of perfect matchings in $G$ is at least
$$\delta^m\cdot m!\left(1-o(1)\right)^m.$$
\end{theorem}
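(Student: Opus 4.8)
The plan is to pass to the permanent of the biadjacency matrix and invoke sharp van der Waerden--type permanent inequalities. Write $A=(a_{ij})\in\{0,1\}^{m\times m}$ for the biadjacency matrix of $G$, so that the number of perfect matchings equals $\mathrm{per}(A)$. If $G$ were $\delta m$-regular the result would be immediate: then $A/(\delta m)$ is doubly stochastic, and the Egorychev--Falikman theorem (van der Waerden's conjecture) gives $\mathrm{per}(A)=(\delta m)^m\,\mathrm{per}\!\big(A/(\delta m)\big)\ge(\delta m)^m\cdot\frac{m!}{m^m}=\delta^m m!$, with no loss at all. The whole difficulty is therefore the \emph{irregularity} of $G$, and one cannot simply regularize: minimum degree $\delta m$ need not contain a $\delta m$-regular spanning subgraph (Csaba's Theorem \ref{Csaba} only delivers an $r$-factor with $r<\delta m$ when $\delta<1$, which would lose the constant). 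Moreover the bound is genuinely tight up to $(1-o(1))^m$: for the graph that is a disjoint blow-up of two complete bipartite halves one has $\mathrm{per}(A)=((m/2)!)^2=\delta^m m!\cdot m^{O(1)}$ with $\delta=1/2$, so any proof must be asymptotically sharp.

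To handle a general (irregular) $A$ I would use Gurvits's capacity inequality $\mathrm{per}(A)\ge\frac{m!}{m^m}\,\mathrm{cap}(A)$, where $\mathrm{cap}(A)=\inf_{x>0}\ \prod_{i}\big(\sum_{j\in N(i)}x_j\big)\big/\prod_j x_j$; this packages Egorychev--Falikman together with Sinkhorn scaling (equivalently, one scales $A$ to a doubly stochastic $D$ by positive diagonal matrices and applies van der Waerden to $D$, with $\mathrm{cap}(A)$ the product of the scaling factors), and it reduces to van der Waerden when $A$ is already doubly stochastic. Since $\frac{m!}{m^m}\,(\delta m)^m=\delta^m m!$, the theorem reduces to the single estimate $\mathrm{cap}(A)\ge(\delta m)^m(1-o(1))^m$. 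Equivalently, using the convex-duality identity $\log\mathrm{cap}(A)=\max\{-\sum_{(i,j)\in E}d_{ij}\log d_{ij}:\ D=(d_{ij})\ \text{doubly stochastic},\ \mathrm{supp}(D)\subseteq E(G)\}$, it suffices to exhibit a single doubly stochastic matrix supported on $G$ whose entropy $-\sum_{ij}d_{ij}\log d_{ij}$ is at least $m\log(\delta m)-o(m)$ — informally, a fractional perfect matching that spreads each row almost uniformly over its $\ge\delta m$ neighbours.

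The main obstacle is precisely this capacity/entropy lower bound, and it is where the hypothesis $\delta\ge 1/2$ is essential: in the unbalanced Dirac-extremal regime ($K_{m,m+1}$-type graphs) no perfect matching exists at all, so no such estimate can survive below the threshold. The starting point I would take is the weighted AM--GM bound $\sum_{j\in N(i)}x_j\ge \deg(i)\big(\prod_{j\in N(i)}x_j\big)^{1/\deg(i)}\ge \delta m\big(\prod_{j\in N(i)}x_j\big)^{1/\deg(i)}$, which multiplies out to $\prod_i\sum_{j\in N(i)}x_j\ge(\delta m)^m\prod_j x_j^{\,c_j}$, where $c_j=\sum_{i\in N(j)}1/\deg(i)$ satisfies $\frac1m\sum_j c_j=1$. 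The loss is the deficiency factor $\prod_j x_j^{\,c_j}$, which can be a constant per coordinate, and the crux is to recover it. The point is that when the $x_j$ are far from uniform, each sum $\sum_{j\in N(i)}x_j$ is in fact dominated by its largest terms; and since $\delta(G)\ge m/2$ forces strong overlap of neighbourhoods (any two of size $>m/2$ must meet, and every neighbourhood omits fewer than $m/2$ columns), almost every row still ``sees'' the heavy columns, which is exactly what compensates the AM--GM slack. This expansion, available precisely when $\delta>1/2$ (with the half-density case being the extremal one, handled with more care), is the technical heart of the argument.

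On the constructive side the same obstacle can be phrased as building the required high-entropy doubly stochastic matrix directly: one starts from the row-uniform stochastic matrix $R_{ij}=a_{ij}/\deg(i)$, whose entropy is $\sum_i\log\deg(i)\ge m\log(\delta m)$, and corrects it to doubly stochastic through a transportation/flow argument (the column sums of $R$ already average to $1$). The remaining task is to show that this correction costs only $o(m)$ in entropy, and controlling that entropy loss — bounding how far the column sums of $R$ can deviate, again using $\delta>1/2$ — is where I expect the real work to lie. Once the capacity estimate $\mathrm{cap}(A)\ge(\delta m)^m(1-o(1))^m$ is in hand, the conclusion $\mathrm{per}(A)\ge\frac{m!}{m^m}\,\mathrm{cap}(A)\ge\delta^m m!(1-o(1))^m$ is immediate.
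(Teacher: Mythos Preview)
The paper does not supply a proof of Theorem~\ref{CK}: it is quoted as a result of Cuckler and Kahn (with a pointer to \cite{CK}, p.~3) and used as a black box in the proof of Theorem~\ref{main1}. So there is no in-paper argument to compare against; the relevant benchmark is Cuckler and Kahn's own proof.

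Your framework is correct and is essentially the one Cuckler and Kahn use. Their result is phrased in terms of the maximum entropy $h(G)$ of a perfect fractional matching of $G$, which for a bipartite graph is exactly your $\max_D H(D)$ over doubly stochastic $D$ supported on $E(G)$; and your reduction via Gurvits's capacity inequality, together with the duality $\log\mathrm{cap}(A)=\max_D H(D)$ for $0$--$1$ matrices, is valid and correctly reduces the theorem to the single inequality $\max_D H(D)\ge m\log(\delta m)-o(m)$.

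The gap is that you do not prove this inequality --- and you say so yourself (``where I expect the real work to lie''). That is honest, but it means the proposal is a reformulation rather than a proof: the entropy lower bound \emph{is} the content of the theorem, not a lemma on the way to it. Your two heuristics --- AM--GM with neighbourhood overlap, and Sinkhorn-correcting the row-uniform matrix $R$ --- are reasonable intuitions but neither is close to an argument as stated. For the second, note that the column sums $c_j=\sum_{i\in N(j)}1/\deg(i)$ of $R$ can range over all of $[\delta,1/\delta]$, so a naive rebalancing could cost $\Theta(m)$ entropy; showing it does not is precisely where the structural analysis of the entropy-maximising weighting (exploiting $\delta\ge 1/2$) enters in Cuckler and Kahn's argument. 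In short: right route, but you have stopped at the hard step rather than carried it out.
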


%

\subsection{Factors in random subgraphs of dense
graphs}\label{subsec:randomfactor}

In this subsection we prove Theorem  \ref{RandomKaterinis}. In the
proof we make use of the following condition for having a $k$-factor
in a bipartite graph due to Gale and Ryser \cite{GaleRyser} (a proof
can also be found at \cite{Lovasz}, Problem 7.16).

\begin{proposition} \label{Pr1} A bipartite graph
$G=(A\cup B,E)$ with $|A|=|B|$ contains an $r$-factor if and only if
for all $X\subseteq A$ and $Y\subseteq B$ the following holds:
$$r|X|\leq
e_G(X,Y)+r(|B|-|Y|). $$
\end{proposition}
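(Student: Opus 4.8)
The plan is to prove both implications, obtaining necessity by a direct edge count and sufficiency via a max-flow/min-cut argument on an auxiliary network. For necessity, suppose $G$ contains an $r$-factor $F$, and fix $X\subseteq A$ and $Y\subseteq B$. Every vertex of $X$ is incident to exactly $r$ edges of $F$, all landing in $B$, so the number of $F$-edges meeting $X$ equals $r|X|$. I would split these edges according to whether the other endpoint lies in $Y$ or in $B\setminus Y$. The number landing in $Y$ is at most $e_G(X,Y)$ since $F\subseteq G$, while the number landing in $B\setminus Y$ is at most $r|B\setminus Y|=r(|B|-|Y|)$, because each vertex of $B\setminus Y$ carries at most $r$ edges of $F$. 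Summing the two bounds yields $r|X|\leq e_G(X,Y)+r(|B|-|Y|)$, which is exactly the stated inequality.

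For sufficiency I would set up a flow network $N$ with a source $s$ and a sink $t$: add an arc $s\to a$ of capacity $r$ for each $a\in A$, an arc $b\to t$ of capacity $r$ for each $b\in B$, and an arc $a\to b$ of capacity $1$ for each edge $ab\in E(G)$. All capacities are integers, so $N$ admits an integral maximum flow. If that flow has value $rn$, then all $n$ source arcs are saturated and, by conservation, so are all $n$ sink arcs; the unit-capacity arcs carrying one unit of flow then form a spanning subgraph in which every vertex of $A\cup B$ has degree exactly $r$, i.e. an $r$-factor. Thus it suffices to produce a flow of value $rn$, and by max-flow/min-cut this amounts to showing that every $s$--$t$ cut has capacity at least $rn$.

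The heart of the argument is the cut computation. An arbitrary cut is determined by its source side $S$; writing $X=A\cap S$ for the $A$-vertices on the source side and $Y=B\setminus S$ for the $B$-vertices on the sink side, the cut edges are precisely: the arcs $s\to a$ with $a\in A\setminus X$, contributing $r(n-|X|)$; the arcs $b\to t$ with $b\in B\setminus Y$, contributing $r(n-|Y|)$; and the arcs $a\to b$ with $a\in X$, $b\in Y$, contributing $e_G(X,Y)$. Hence the cut has capacity $r(n-|X|)+r(n-|Y|)+e_G(X,Y)$, and requiring this to be at least $rn$ for every pair $(X,Y)$ rearranges to exactly $r|X|\leq e_G(X,Y)+r(|B|-|Y|)$, the hypothesis. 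The step I expect to demand the most care is this bookkeeping: checking that as $S$ ranges over all source sides the pair $(X,Y)$ ranges over all of $2^A\times 2^B$ independently, and that the three cut terms align term-by-term with the stated condition after the substitution. The remaining ingredient, integrality of the maximum flow in an integer-capacitated network, is standard (e.g. via augmenting paths), so no separate work is needed there.
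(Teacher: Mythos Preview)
Your argument is correct. The necessity direction is a clean double count, and for sufficiency your max-flow/min-cut reduction is carried out accurately: the parametrisation $X=A\cap S$, $Y=B\setminus S$ indeed gives a bijection between source sides $S$ with $s\in S$, $t\notin S$ and pairs $(X,Y)\in 2^A\times 2^B$, and the three contributions to the cut capacity are exactly as you list, so the inequality $r(n-|X|)+r(n-|Y|)+e_G(X,Y)\geq rn$ rearranges to the hypothesis.

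As for comparison with the paper: the paper does not supply its own proof of this proposition but quotes it as a known result due to Gale and Ryser, pointing to \cite{GaleRyser} and to Problem~7.16 of \cite{Lovasz} for a proof. Your flow-based derivation is one of the standard routes to this statement (the other common one goes through Hall's theorem applied to an auxiliary bipartite graph with $r$ copies of each vertex, or through Ore--Ryser type defect formulas); either is acceptable here, and what you wrote would serve as a self-contained replacement for the citation.
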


Now we are ready to prove Theorem \ref{RandomKaterinis}.

\begin{proof} Let $G$ be a graph as described in the theorem. We wish to show that a graph $G'\sim G_p$ is w.h.p such that
$$(*) \text{  }k|X|\leq
e_{G_p}(X,Y)+k(n-|Y|),$$ for all $X\subseteq A$ and $Y\subseteq B$,
where $k=(1-\varepsilon)\rho np$ (and then by Proposition \ref{Pr1}
we are done). We distinguish between several cases and consider each
of them separately:

{\bf Case 1:} $|X|+|Y|\leq n$. In this case, since $n-|Y|\geq |X|$,
it follows that $(*)$ is trivial.

{\bf Case 2:}  $|X|+|Y|>n$ (that is, $|Y|\geq n-|X|+1$) and $|X|\leq
f(n)$, where $f(n)=n/\ln\ln n$. Here, since $|Y|>n-|X|=(1-o(1))n$,
$\delta(G)=\alpha n$ and $\alpha>1/2$, it follows that $e_G(X,Y)\geq
(1-o(1))\alpha n|X|$. Using the fact that $e_{G_p}(X,Y)$ is
binomially distributed, applying Chernoff and the union bound we
obtain that
\begin{align*}
\Pr(\exists \text{ such } X,Y \text{ with }& e_{G_p}(X,Y)\leq
(1-\varepsilon/2)e_G(X,Y)p)\leq \sum_{x=1}^{f(n)}\sum_{y=n-x+1}^n
\binom{n}{x}\binom{n}{y}e^{-\Theta(npx)}\\
&=\sum_{x=1}^{f(n)}
\binom{n}{x}\left(\sum_{y=n-x+1}^n\binom{n}{y}\right)e^{-\Theta(npx)}\\
&\leq \sum_{x=1}^{f(n)}x\binom{n}{x}\binom{n}{x-1}
e^{-\Theta(npx)}\\
&=\sum_{x=1}^{f(n)}\frac{x^2}{n-x+1}\binom{n}{x}^2
e^{-\Theta(npx)}\\
&\leq \left(f(n)\right)^2\cdot\sum_{x=1}^{f(n)}\binom{n}{x}^2
e^{-\Theta(npx)},
\end{align*}

which is (recall that $np=\omega(\ln n)$) at most
$$\left(f(n)\right)^2\cdot\sum_{x=1}^{f(n)}\left(\frac{e^2n^2}{x^2}e^{-\omega(\ln n)}\right)^x=n^{-\omega(1)}.$$

Hence, since $(1-\varepsilon)\rho<(1-2\varepsilon/3)\alpha$, it
follows that with probability $1-n^{-\omega(1)}$ we have
$$e_{G_p}(X,Y)+(1-\varepsilon)\rho np(n-|Y|)\geq e_{G_p}(X,Y)$$ $$\geq (1-2\varepsilon/3)\alpha np|X|\geq (1-\varepsilon)\rho np|X|,$$ for each such $X$
and $Y$, and $(*)$ holds.

{\bf Case 3:} $|X|+|Y|>n$ and $|X|>f(n)$. Let

$$\eta_G(x,y)=\min\{e_G(X,Y):X\subseteq A,\text{ } Y\subseteq
B,\text{ }|X|=x, \text{ and } |Y|=y\}.$$

Note that by our assumptions we only consider $x$ and $y$ for which
$x\geq f(n)$ and $x+y\geq n+1$.

Clearly, $e_G(X,Y)\geq x(\alpha n+y-n)$ and $e_G(X,Y)\geq y(\alpha
n+x-n)$ for arbitrary sets $X\subseteq A$ and $Y\subseteq B$ of
sizes $x$ and $y$, respectively. Therefore we have that
$$\eta_G(x,y)\geq \max\{x(\alpha n+y-n),y(\alpha n+x-n)\}.$$

Assume first that $x\leq y$ (and therefore, the maximum in the right
hand side of the above inequality is $x(\alpha n+y-n)$). Since
$\eta_G(x,y)\geq x(\alpha n+y-n)$, it follows that for each such $X$
and $Y$ we have that $e_G(X,Y)\geq x(\alpha n+y-n)$. Applying
Chernoff and the union bound, using the fact that $\alpha
n+y-n=\Theta(n)$ (here we use that $\alpha-1/2\geq c>0$ for some
constant $c$) we obtain that
$$\Pr\left(\exists \text{ such }X,Y \text{ with } e_{G_p}(X,Y)\leq
(1-\varepsilon)e_G(X,Y)p\right)\leq
4^n\cdot\sum_{x=f(n)}^{n}e^{-\Theta(xnp)}$$
$$=e^{-\omega(n)}.$$

By symmetry, the above estimate is valid for $y\leq x$ as well.

Now, recall that $G$ contains a $\rho n$-factor and hence by
Proposition \ref{Pr1} satisfies $\rho nx\leq e_G(X,Y)+\rho n(n-y)$
for all $X\subseteq A$ and $Y\subseteq B$. Multiply both sides by
$(1-\varepsilon)p$. Since if $X$ and $Y$ satisfy the assumptions of
Case 3, we have with probability $1-n^{-\omega(1)}$ that
$e_{G_p}(X,Y)\geq(1-\varepsilon)e_{G}(X,Y)p$, and it follows that
$$(1-\varepsilon)\rho np|X|\leq (1-\varepsilon)e_{G}(X,Y)p+(1-\varepsilon)\rho np(n-|Y|)$$
$$\leq e_{G_p}(X,Y)+(1-\varepsilon)\rho np(n-|Y|)$$
holds for each $X\subseteq A$ and $Y\subseteq B$ covered by Case 3.
Therefore, by Proposition \ref{Pr1} we conclude that with
probability $1-n^{-\omega(1)}$ the random subgraph $G_p$ contains a
$(1-\varepsilon)\rho np$-factor as desired.
\end{proof}

\subsection{Properties of random partitions of
vertices}\label{subsec:properties} In this subsection we introduce
several lemmas about properties of random partitions of vertices of
dense hypergraphs. The following lemma shows that the vertex set of
a dense $k$-uniform hypergraph can be partitioned in such a way that
the proportion of the degrees to each part remains about the same as
in the hypergraph.

\begin{lemma}\label{key1}
Let $k$ be a positive integer and let $\delta>0$ and $\varepsilon>0$
be real numbers. Then, for every $c>0$ and a sufficiently large
integer $n$, the following holds. Suppose that
\begin{enumerate}[$(i)$]
\item $\mathcal H$ is a $k$-uniform hypergraph with
$n$ vertices, and
\item $\delta_{k-1}(\mathcal H)\geq \delta n+\varepsilon n$, and
\item $m_1,\ldots, m_{t}$ are integers such that $m_i\geq c n$ for $1\leq i\leq t$, and
$m_1+\ldots+m_t=n$, and
\item $V(\mathcal H)=V_1\cup \ldots \cup V_{t}$ is a partition of $V(\mathcal H)$, chosen uniformly
at random among all partitions into $t$ parts, with part $V_i$ of
size exactly $m_i$ for every $1\leq i \leq t$.
\end{enumerate}
Then, with probability $1-e^{-\Theta(n)}$ the following holds:
\[\delta_{k-1}(V_i)\geq (\delta+2\varepsilon/3)m_i \textrm{ for
every } 1\leq i \leq t.\]
\end{lemma}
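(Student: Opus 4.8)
The plan is to fix a single $(k-1)$-set $X \subseteq V(\mathcal H)$ and a single index $i$, estimate $d_{\mathcal H}(X, V_i)$, then take a union bound over all $\binom{n}{k-1}$ choices of $X$ and all $t$ indices. First I would deal with the easy case $X \subseteq V_i$: here one can simply throw away one vertex of $X$ and consider $X' = X \setminus \{v\}$, so it suffices to bound $d_{\mathcal H}(X', V_i)$ from below for $(k-2)$-sets $X'$; but in fact it is cleaner to handle everything uniformly by conditioning. The key point is that for a \emph{fixed} $(k-1)$-set $X$ (whichever part its vertices land in), the quantity $d_{\mathcal H}(X, V_i)$ counts the number of neighbors of $X$ — i.e. vertices $w$ with $X \cup \{w\} \in E(\mathcal H)$ — that fall into $V_i$. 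Since $X$ has $d_{\mathcal H}(X) \geq \delta_{k-1}(\mathcal H) \geq (\delta + \varepsilon)n$ neighbors in $V(\mathcal H) \setminus X$, and the partition is chosen uniformly at random, the number of these neighbors landing in $V_i$ is a hypergeometric random variable: we are drawing (the relevant $m_i$ or $m_i$-minus-a-few) slots of $V_i$ out of roughly $n$, and asking how many of the $\geq (\delta+\varepsilon)n$ neighbors are selected. Its expectation is $(1 \pm o(1))(\delta+\varepsilon) m_i$, which exceeds $(\delta + 2\varepsilon/3)m_i$ with room to spare once $n$ is large. (The handful of vertices of $X$ themselves, which are pinned to whichever parts they lie in, perturb the counts by at most $k$, negligible since $m_i \geq cn$.)

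Next I would apply the Chernoff-type bound for the hypergeometric distribution — this is exactly the Remark following Lemma \ref{Che} — to the event that $d_{\mathcal H}(X, V_i) < (\delta + 2\varepsilon/3)m_i$. Writing $Y$ for this hypergeometric variable with $\mathbb{E}(Y) \geq (1-o(1))(\delta+\varepsilon)m_i$, we want $Y \geq (\delta + 2\varepsilon/3)m_i = (1 - a)\mathbb{E}(Y)$ for a constant $a > 0$ (one can take any $a$ with $(1-a)(\delta + \varepsilon) > \delta + 2\varepsilon/3$, e.g. $a = \varepsilon/(4(\delta+\varepsilon))$); then the lower-tail bound gives
\[
\Pr\!\left[ d_{\mathcal H}(X, V_i) < (\delta + 2\varepsilon/3)m_i \right] \leq e^{-a^2 \mathbb{E}(Y)/2} \leq e^{-c' n}
\]
for some constant $c' = c'(\delta, \varepsilon, c) > 0$, using $m_i \geq cn$. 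Finally, union-bounding over the $t \leq 1/c$ indices and the at most $2^n$ sets $X$, the failure probability is at most $\frac{1}{c} \cdot 2^n \cdot e^{-c'n}$, which is $e^{-\Theta(n)}$ provided $c' > \ln 2$; if $c'$ as produced is too small, one first shrinks $a$ (equivalently observes $\mathbb{E}(Y) = \Theta(n)$ with a constant one is free to compare against $\ln 2$ — the exponent $a^2\mathbb{E}(Y)/2$ is a constant times $n$ and the constant can be made to beat $\ln 2$ by noting the bound is really $\leq e^{-\Theta(n)}$ and absorbing, which is fine since the lemma only claims $1 - e^{-\Theta(n)}$, not a specific constant). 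This completes the argument.

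The only genuinely delicate point is making the dependence of $d_{\mathcal H}(X, V_i)$ on the random partition genuinely hypergeometric: the partition fixes the sizes $m_1, \dots, m_t$, so once we condition on which slots of the partition the (at most $k-1$) vertices of $X$ occupy, the remaining $n - (k-1)$ vertices — among them all $\geq (\delta+\varepsilon)n$ neighbors of $X$ — are distributed into the remaining slots uniformly, and the count of neighbors falling into the part(s) indexed by $i$ is hypergeometric with population size $n - O(k)$ and sample size $m_i - O(k)$. Since these $O(k)$ corrections change both the sample size and the success count by at most $k = O(1)$, and $m_i \geq cn$, they shift $\mathbb{E}(Y)$ by a $o(1)$ multiplicative factor, absorbed into the slack between $\delta + \varepsilon$ and $\delta + 2\varepsilon/3$. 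I expect writing this conditioning cleanly — rather than the concentration estimate, which is routine — to be the main obstacle.
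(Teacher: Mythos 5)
Your proposal is correct and follows essentially the same route as the paper: view $d_{\mathcal H}(X,V_i)$ as a hypergeometric variable with mean at least roughly $(\delta+\varepsilon)m_i$, apply the hypergeometric Chernoff bound, and union-bound over all $(k-1)$-sets $X$ and indices $i$. Your closing digression about a $2^n$ union bound and needing $c'>\ln 2$ is unnecessary (and the ``absorbing'' fix as stated would not actually work): since $k$ is fixed there are only $\binom{n}{k-1}=O(n^{k-1})$ sets $X$, so the polynomial union bound is harmlessly swallowed by $e^{-\Theta(n)}$, exactly as in the paper.
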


\begin{proof}
Let $V(\mathcal H)=V_1\cup\ldots \cup V_{t}$ be a random partition
of $V(\mathcal H)$ into $t$ parts, each of size exactly $m_i$, and
set
$$a_i=(\delta+2\varepsilon/3)m_i.$$ Now, note that for each $X\in
\binom{V(\mathcal H)}{k-1}$ and for each $1\leq i \leq t$, the
parameter $d_{\mathcal H}(X,V_i)$ has a hypergeometric distribution
with mean $\mu\geq (\delta+\varepsilon)m_i$. Therefore, by Lemma
\ref{Che} it follows that
$$\Pr\left[d_{\mathcal H}(X,V_i)< a_i \right]\leq e^{-\Theta(m_i)}=e^{-\Theta(n)}.$$

Applying the union bound we obtain that
$$\Pr\left[\exists X\in \binom{V(\mathcal H)}{k-1} \textrm{ and } 1\leq i\leq
t \textrm{ such that } d_{\mathcal H}(X,V_i)<a_i\right]\leq
\Theta(n^{k-1})e^{-\Theta(n)}=e^{-\Theta(n)}.$$

This completes the proof. \end{proof}

Let $\mathcal H$ be a $k$-uniform hypergraph on $n$ vertices and let
$0\leq \ell \leq k/2$ be an integer. Assume in addition that $n$ is
divisible by $k-\ell$ and that our goal is to find Hamilton
$\ell$-cycles in $\mathcal H$. We distinguish between two cases and
for each case, in a similar way as in \cite{FK}, we define an
auxiliary graph that will serve us throughout the paper.

(1) Case $1\leq \ell<k/2$. Let $V(\mathcal H)=A\cup B$ be a
partition of $V(\mathcal H)$ for which $|A|=\ell\cdot
\frac{n}{k-\ell}$. Let $\mathcal M_A=(F_0,\ldots, F_{m-1})$ be a
sequence of $m:=\frac{n}{k-\ell}$ disjoint $\ell$-subsets of $A$ and
let $\mathcal M_B$ be a (non-ordered) collection of
$\frac{|B|}{k-2\ell}=(n-\ell\cdot \frac{n}{k-\ell})/(k-2\ell)=m$
disjoint $(k-2\ell)$-subsets of $B$. Note that $\mathcal M_A$ can be
considered as a spanning $(2\ell,\ell)$-cycle of $A$ and $\mathcal
M_B$ as a perfect matching of the complete $(k-2\ell)$-uniform
hypergraph on the vertex set $B$. Define an auxiliary bipartite
graph $G_{\mathcal H}:=G(\mathcal M_A,\mathcal M_B,\mathcal
H)=(S\cup T,E)$, with both parts of size $|S|=|T|=m$, as follows:
\begin{enumerate} [$(i)$]
\item $S:=\left\{ F_{i}F_{i+1}: 0\leq i\leq m-1
\right\}$ (we refer to $m$ as $0$), and
\item $T:=\mathcal M_B$, and
\item for $s\in S$ and $t \in T$, $st\in E$ if and only if $t\cup
F_{i}\cup F_{i+1}\in E(\mathcal H)$, where $i$ is the unique integer
for which $s=F_{i}F_{i+1}$.
\end{enumerate}

A moment's thought now reveals that there is an injection between
perfect matchings of $G_{\mathcal H}$ and Hamilton $\ell$-cycles of
$\mathcal H$. This fact is used extensively throughout the paper.

(2) Case $\ell=0$ (note that a Hamilton $0$-cycle is a perfect
matching). Here we take a partition $V(\mathcal H)=A\cup B$ into two
sets $A$ and $B$ such that $|A|=\frac{\lfloor k/2\rfloor\cdot
n}{k}$. Let $\mathcal M_A$ be a collection of $\frac{n}{k}$ disjoint
subsets of $A$, each of size exactly $\lfloor k/2\rfloor$, and let
$\mathcal M_B$ be a collection of $\frac{n}{k}$ disjoint subsets of
$B$, each of size exactly $\lceil k/2\rceil$. Define an auxiliary
bipartite graph $G_{\mathcal H}:=G(\mathcal M_A,\mathcal
M_B,\mathcal H)=(S\cup T,E)$, with parts $S$ and $T$ as follows:
\begin{enumerate} [$(i)$]
\item $S=\mathcal M_A$ and $T=\mathcal M_B$, and
\item for $s\in S$ and $t \in T$, $st\in E$ if and only if $s\cup
t\in E(\mathcal H)$.
\end{enumerate}

Note that in this case every perfect matching in $G_{\mathcal H}$
corresponds to a perfect matching (a Hamilton $(k,0)$-cycle) of
$\mathcal H$.

%

The following lemma shows that by picking $V(\mathcal H)=A\cup B$,
$\mathcal M_A$ and $\mathcal M_B$ at random, the auxiliary graph
$G_{\mathcal H}$ typically possesses some desirable properties.

\begin{lemma} \label{key2}
Let $\ell$ and $k$ be integers for which $0\leq \ell<k/2$. Let
$\delta>0$ and $\varepsilon>0$ be real numbers. Then, for
sufficiently large integers $n$ the following holds. Suppose that
\begin{enumerate} [$(i)$]
\item $(k-\ell)|n$, and
\item $\mathcal H$ is a $k$-uniform hypergraph on $n$
vertices, and
\item $\delta_{k-1}(\mathcal H)\geq \delta n+\varepsilon n$.
\end{enumerate}

Then, for a random uniform choice of $A$, $B$, $\mathcal M_A$ and
$\mathcal M_B$ as described above, with probability
$1-e^{-\Theta(n)}$ we get that $\delta(G_{\mathcal H})\geq
(\delta+\varepsilon/2)m$, where $m=|\mathcal M_A|$.
\end{lemma}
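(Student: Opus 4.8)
The plan is to fix an arbitrary vertex $s$ of $G_{\mathcal H}$ and show that, with probability $1-e^{-\Theta(n)}$, its degree is at least $(\delta+\varepsilon/2)m$; the lemma then follows by a union bound over the $\Theta(n)$ vertices of $G_{\mathcal H}$. Since the structure is slightly different in the two cases, I would treat them in parallel but with the same idea. In Case $1\le\ell<k/2$, a vertex $s\in S$ corresponds to a pair $F_i F_{i+1}$, i.e. a fixed $2\ell$-subset $W=F_i\cup F_{i+1}$ of $A$, and its neighbours are exactly those $t\in\mathcal M_B$ (disjoint $(k-2\ell)$-subsets of $B$) with $W\cup t\in E(\mathcal H)$. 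A vertex $t\in T$ is handled symmetrically, with the roles of the ordered pair in $A$ and the block in $B$ exchanged. In Case $\ell=0$ a vertex $s\in\mathcal M_A$ is a fixed $\lfloor k/2\rfloor$-subset of $A$ whose neighbours are the $t\in\mathcal M_B$ with $s\cup t\in E(\mathcal H)$, and again symmetrically for $t\in T$. So in all cases the degree of a fixed vertex is a sum, over the $m$ blocks of the opposite matching, of indicators of ``this block completes the fixed small set to an edge of $\mathcal H$''.

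The heart of the argument is a lower bound on the \emph{expected} degree. Here I would exploit the hypothesis $\delta_{k-1}(\mathcal H)\ge\delta n+\varepsilon n$ as follows. Given the fixed small set $U$ of size $j$ (where $j=2\ell$ or $\lfloor k/2\rfloor$ or $\lceil k/2\rceil$ as appropriate) living in one part, and given the $m$ blocks of the opposite matching each of size $k-j$, pick any block $b$; I want to estimate $\Pr[U\cup b\in E(\mathcal H)]$ over the randomness of the partition and the matchings. Extend $b$ by any $(k-1-j)$ of its own vertices together with $U$ to a $(k-1)$-set $X$; by the degree condition, $d_{\mathcal H}(X)\ge\delta n+\varepsilon n$, so a random extra vertex completes $X$ to an edge with probability $\ge(\delta+\varepsilon)(1-o(1))$. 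The cleanest way to make this rigorous is to reveal the randomness in stages: first expose the partition $V(\mathcal H)=A\cup B$ and the small set $U$, then reveal the matching structure block by block, using at each step that the remaining vertices still see, via the degree condition, many edges through $U$. By linearity of expectation this gives $\mathbb{E}[d_{G_{\mathcal H}}(s)]\ge(\delta+2\varepsilon/3)m$ for $n$ large. (Alternatively, and perhaps more simply, one can appeal to Lemma \ref{key1}: a random partition refining $A$ and $B$ into the individual blocks keeps the relative codegrees above $\delta+2\varepsilon/3$ with probability $1-e^{-\Theta(n)}$, which immediately yields the expectation bound and in fact more.)

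Once the expectation is pinned down, concentration finishes the job. The degree $d_{G_{\mathcal H}}(s)$ is a function of a uniformly random permutation of (a subset of) $V(\mathcal H)$ that determines the assignment of vertices to blocks; transposing two vertices changes at most a bounded number of blocks and hence changes $d_{G_{\mathcal H}}(s)$ by at most a constant $u=O_k(1)$. So Theorem \ref{talagrand} applies with $t=\Theta(m)=\Theta(n)$ and gives $\Pr[d_{G_{\mathcal H}}(s)<(\delta+\varepsilon/2)m]\le 2\exp(-\Theta(n))$. A union bound over the $2m=\Theta(n)$ vertices of $G_{\mathcal H}$ then yields $\delta(G_{\mathcal H})\ge(\delta+\varepsilon/2)m$ with probability $1-e^{-\Theta(n)}$, as claimed. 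The main obstacle, and the only place requiring care, is the expectation computation: one must verify that reavealing the matching blocks one at a time never erodes the codegree of $U$ below $(\delta+\varepsilon/2+\Omega(1))n$ before all $m$ blocks are exposed — but since each block has size $O_k(1)$ and there are $m=n/(k-\ell)$ of them, the total number of vertices removed is exactly $n$ and the ``loss'' at each step is only $O_k(1)$ relative to $n$, so the bound $\delta n+\varepsilon n$ degrades by only $o(n)$ throughout, which is absorbed into the slack $\varepsilon/3$. Invoking Lemma \ref{key1} instead sidesteps this bookkeeping entirely.
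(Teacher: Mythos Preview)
Your overall architecture---lower bound the expected degree of a vertex of $G_{\mathcal H}$ via the codegree hypothesis, then apply the permutation bounded-differences inequality (Theorem~\ref{talagrand}) with Lipschitz constant $O_k(1)$, then union bound---is exactly the paper's approach. Two points deserve correction, however.

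First, the expectation step as you wrote it does not quite work. In the ``reveal block by block'' version you claim the codegree bound degrades by only $o(n)$ over the course of exposing all $m$ blocks; but the blocks exhaust the entire opposite part, so the cumulative loss is $\Theta(n)$, and for the last blocks the conditional bound is vacuous. Your alternative, invoking Lemma~\ref{key1} for the refinement of $A$ and $B$ into the individual blocks, also fails: Lemma~\ref{key1} requires each part to have size $\ge cn$, whereas the blocks have size $k-2\ell=O_k(1)$. What the paper actually does (and what your first paragraph almost says before you second-guess yourself) is: apply Lemma~\ref{key1} only to the \emph{two-part} partition $A\cup B$, obtaining $\delta_{k-1}(B)\ge(\delta+2\varepsilon/3)|B|$; then observe that by exchangeability all blocks have the same marginal distribution, so it suffices to compute $\Pr[U\cup b_0\in E(\mathcal H)]$ for the first block $b_0$, where now the last vertex of $b_0$ is uniform in $B$ minus $O_k(1)$ vertices and the relative-to-$B$ degree condition gives the bound $\ge\delta+2\varepsilon/3$ directly. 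Linearity then yields $\mathbb E[d_{G_{\mathcal H}}(s)]\ge(\delta+2\varepsilon/3)m$.

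Second, a minor structural point: since the vertices $s\in S$ are determined by $\mathcal M_A$ and the vertices $t\in T$ by $\mathcal M_B$, ``fixing a vertex of $G_{\mathcal H}$'' already uses part of the randomness. Your argument still goes through by conditioning on $\mathcal M_A$ when bounding degrees in $S$ (and on $\mathcal M_B$ for $T$), since the two matchings are independent given $(A,B)$. The paper sidesteps this by proving the stronger statement that \emph{every} $X\in\binom{V(\mathcal H)}{2\ell}$ (respectively $\binom{V(\mathcal H)}{k-2\ell}$) has large degree into $\mathcal M_B$ (respectively into the consecutive pairs of $\mathcal M_A$), and only then specialising to the $X$'s that happen to be vertices of $G_{\mathcal H}$; the union bound is over $n^{O_k(1)}$ sets, still negligible against $e^{-\Theta(n)}$.
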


\begin{proof}
First, consider the case where $1\leq\ell<k/2$. Let $V(\mathcal
H)=A\cup B$ be a typical partition as obtained by Lemma \ref{key1}
with $m_1=\frac{\ell}{k-\ell}\cdot n$ and $m_2=n-m_1$. The
conclusion of Lemma \ref{key2} for this case is an immediate
consequence of the following two claims:

\begin{claim} \label{1}
With probability $1-e^{-\Theta(n)}$ a random collection $\mathcal
M_B$ as described above is such that
$$|\left\{Y\in \mathcal M_B: X\cup Y\in E(\mathcal H)\right\}|\geq
(\delta+\varepsilon/2)|\mathcal M_B|$$ holds for each $X\in
\binom{V(\mathcal H)}{2\ell}$. In particular, $d_{G_{\mathcal
H}}(s)\geq (\delta+\varepsilon/2)m$ for every $s\in S$.
\end{claim}

\begin{proof} We pick $\mathcal M_B$ as follows: Let $\{v_0,\ldots,v_{|B|-1}\}$ be a random enumeration of
the elements of $B$ and define $$\mathcal
M_B:=\left\{\{v_{j},\ldots, v_{j+k-2\ell-1}\}: j=(k-2\ell)i, 0\leq
i\leq m-1\right\}.$$ Now, for a subset $X\in \binom{V(\mathcal
H)}{2\ell}$, define $d_B(X)=|\left\{Y\in \mathcal M_B: X\cup Y\in
E(\mathcal H)\right\}|$. We wish to show that
$$\Pr\left[\exists X\in \binom{V(\mathcal H)}{2\ell} \textrm{ such
that } d_B(X)<(\delta+\varepsilon/2)m\right]=e^{-\Theta(n)}.$$

Indeed, fix $X\in \binom{V(\mathcal H)}{2\ell}$, and for each $0\leq
i\leq m-1$, let $Y_i$ be the indicator random variable for the event
``$X\cup \{v_j,\ldots, v_{j+k-2\ell-1}\}\in E(\mathcal H)$", where
$j=(k-2\ell)\cdot i$. Starting the enumeration of the elements of
$B$ from the $j^{th}$ place and using the fact that $$d_{\mathcal
H}\left(X\cup\{v_j,\ldots,v_{j+k-2\ell-2}\},B\right)\geq
(\delta+2\varepsilon/3)|B|,$$ we obtain that $\mathbb{E}(Y_i)\geq
\delta+2\varepsilon/3$, for every $0\leq i\leq m-1$. Hence,
$$\mathbb{E}(d_B(X))=\sum_{i=0}^{m-1} \mathbb{E}(Y_i)\geq
(\delta+2\varepsilon/3)m.$$

Now, given an enumeration of $B=\{v_0,\ldots,v_{|B|-1}\}$, by
switching between two elements $v_i$ and $v_j$, $d_B(X)$ can change
by at most $2$. Therefore, using Theorem \ref{talagrand} it follows
that

\begin{eqnarray*}
\Pr\left[d_B(X)<(\delta+\varepsilon/2)|\mathcal M_B|\right] & \leq &
\Pr\left[d_B(X)<\mathbb{E}(d_B(X))-\varepsilon m/6 \right] \\
&\leq &2\exp\left(\frac{-\varepsilon^2m^2}{|B|72}\right)=e^{-\Theta(n)}. \nonumber\\
\end{eqnarray*}

Applying the union bound we obtain that

$$\Pr\left[\exists X\in
\binom{V(\mathcal H)}{2\ell} \textrm{ such that }
d_B(X)<(\delta+\varepsilon/2)|\mathcal M_B|\right]\leq
\Theta(n^{2\ell})e^{-\Theta(n)}=e^{-\Theta(n)}$$

as desired.
\end{proof}

\begin{claim} \label{2}
With probability $1-e^{-\Theta(n)}$, a random (enumerated)
collection $\mathcal M_A=\{F_0,\ldots,F_{m-1}\}$ as described above
is such that
$$|\left\{i: 0\leq i\leq m-1 \textrm{ and } X\cup F_i\cup F_{i+1}\in E(\mathcal H)\right\}|\geq
(\delta+\varepsilon/2)m$$ holds for each $X\in \binom{V(\mathcal
H)}{k-2\ell}$. In particular, $d_{G_{\mathcal H}}(t)\geq
(\delta+\varepsilon/2)m$ for every $t\in T$.
\end{claim}

\begin{proof} We pick $\mathcal M_A$ as follows: Let
$\{u_0,\ldots,u_{|A|-1}\}$ be a random enumeration of the elements
of $A$, and for each $0\leq i\leq m-1$, define $$F_i=\{u_{\ell\cdot
i},\ldots, u_{\ell\cdot(i+1)-1}\}$$ and set
$$\mathcal M_A=\{F_0,\ldots,F_{m-1}\}.$$

Now, for a subset $X\in \binom{V(\mathcal H)}{k-2\ell}$, define
$d_A(X)=|\left\{i: 0\leq i\leq m-1 \textrm{ and } X\cup F_i\cup
F_{i+1}\in E(\mathcal H)\right\}|$, we wish to show that
$$\Pr\left[\exists X \in \binom{V(\mathcal H)}{k-2\ell} \textrm{ such that }
d_A(X)<(\delta+\varepsilon/2)m\right]=e^{-\Theta(n)}.$$

Indeed, fix $X\in \binom{V(\mathcal H)}{k-2\ell}$, and for each
$0\leq i\leq m-1$, let $X_i$ be the indicator random variable for
the event ``$X\cup F_i\cup F_{i+1}\in E(\mathcal H)$". From here,
the proof is similar to the proof of Claim \ref{1} so we omit the
details (the only difference is that here, switching two elements
can change $d_A(X)$ by at most $4$ and not $2$, which does not cause
any problem).
\end{proof}

Next, consider the case where $\ell=0$. In this case, let
$V(\mathcal H)=A\cup B$ be a typical partition as obtained by Lemma
\ref{key1} with $t=2$ and $m_1=\frac{\lfloor k/2\rfloor n}{k}$. Now,
randomly define $\mathcal M_A$ and $\mathcal M_B$ as described
above. Finally, Claim \ref{1} shows that with high probability we
obtain $\delta(G_{\mathcal H})\geq (\delta+\varepsilon/2)m$ as
desired.


This completes the proof.
\end{proof}

\begin{remark}\label{remark1}
If we change Condition $(iii)$ of Lemma \ref{key2} to
$(\delta-\varepsilon)n\leq \delta_{k-1}(\mathcal H)\leq
\Delta_{k-1}(\mathcal H)\leq (\delta+\varepsilon)n$, then the same
proof (more or less line by line) shows that $d_{G_{\mathcal
H}}(v)\in(\delta\pm2\varepsilon)m$ for every $v\in V(G_{\mathcal
H})$. We will make use of this fact in the proof of Theorem
\ref{main3}.
\end{remark}

\section{Proofs of the main results}

\subsection{Proof of Theorem \ref{main1}}

In this subsection we prove Theorem \ref{main1}.

\begin{proof} In order to prove Theorem \ref{main1} we show that for every $\varepsilon>0$, the number of Hamilton $\ell$-cycles in $\mathcal H$ is at least
$$(1-o(1))^n\cdot n!\cdot\left(\frac{\alpha-\varepsilon/2}{\ell!(k-2\ell)!}\right)^{\frac{n}{k-\ell}}.$$

Let $\varepsilon>0$ be a positive constant. Denote
$\delta=\alpha-\varepsilon$ and observe that $\delta_{k-1}(\mathcal
H)\geq (\delta+\varepsilon)n$. First, consider the case where $1\leq
\ell<k/2$. Assume that $V(\mathcal H)=A\cup B$ is a partition of
$V(\mathcal H)$ into two sets $A$ and $B$ with
$|A|=\ell\cdot\frac{n}{k-\ell}$, equipped with $\mathcal M_A$ and
$\mathcal M_B$ as described in Subsection \ref{subsec:properties}.
By applying Lemma \ref{key2} to $\mathcal H$, it follows that a
$(1-o(1))$-fraction of these partitions are such that
$\delta(G_{\mathcal H})\geq (\delta+\varepsilon/2)m$ (where
$m=\frac{n}{k-\ell}$ and $G_{\mathcal H}$ is the auxiliary graph as
defined in Subsection \ref{subsec:properties}). Now, using Theorem
\ref{CK} we obtain that the number of perfect matchings in each such
$G_{\mathcal H}$ is at least
\begin{align*}
&(1-o(1))^n\left(\delta+\varepsilon/2\right)^m\cdot
m!=(1-o(1))^{n}(\delta+\varepsilon/2)^{\frac{n}{k-\ell}}\left(\frac{n}{k-\ell}\right)!.
\end{align*}
Next, note that each perfect matching of $G_{\mathcal H}$
corresponds to a Hamilton $\ell$-cycle. Moreover, note that given
two such partitions $A\cup  B$ and $A'\cup B'$ of $V(\mathcal H)$,
if $A\neq A'$, then clearly the obtained Hamilton $\ell$-cycles
coming from these partitions are distinct. In case $A=A'$, note that
as long as $\mathcal M_A$ and $\mathcal M_{A'}$ do not define the
same cyclic ordering for the same sets $F_0,\ldots,F_{m-1}$ (for a
fixed $\mathcal M_A$ there are at most $2m$ such $\mathcal M_A'$),
the Hamilton $\ell$-cycles obtained from such distinct structures
are all distinct. All in all, combining the above mentioned, each
Hamilton $\ell$-cycle is being counted at most $2m$ times and we
obtain that the number of Hamilton $\ell$-cycles in $\mathcal H$ is
at least

\begin{align*}
&\frac{1}{2m}\cdot(1-o(1))^n\cdot
n!\cdot\left(\frac{\delta+\varepsilon/2}{\ell!(k-2\ell)!}\right)^{\frac{n}{k-\ell}}\\
&=(1-o(1))^n\cdot
n!\cdot\left(\frac{\alpha-\varepsilon/2}{\ell!(k-2\ell)!}\right)^{\frac{n}{k-\ell}}.
\end{align*}

Indeed, we need to multiply the above estimate by the number of
auxiliary graphs $G_{{\cal H}}$. For this, take a permutation of
$V(\mathcal H)$, define $A$ to be its first
$\ell\cdot\frac{n}{k-\ell}$ vertices, $\mathcal M_A$ to be the first
$\frac{n}{k-\ell}$ consecutive (and disjoint) $\ell$-tuples, and
$\mathcal M_B$ to be the last $\frac{n}{k-\ell}$ consecutive
$(k-2\ell)$-tuples. Then, divide by the ordering inside the tuples
and the ordering between the tuples in $\mathcal M_B$.

Next, for the case where $\ell=0$ the proof is more or less the
same. Here, the partitions we consider are of the form $(A,B)$ where
$|A|=\frac{\lfloor k/2\rfloor\cdot n}{k}$, and $\mathcal M_A$ in the
definition of the auxiliary graph $G_{\mathcal H}$ is just a
collection of sets, not enumerated. In addition, every perfect
matching can be obtained by $\binom{k}{\lfloor
k/2\rfloor}^{\frac{n}{k}}$ partitions (from each edge, choose
$\lfloor k/2\rfloor$ elements to be in $A$). All in all, there are
at least

\begin{align*}
&\frac{n!}{\left(\lfloor k/2\rfloor!\lceil
k/2\rceil!\right)^{\frac{n}{k}}}
\cdot\frac{\left((1-o(1))(\delta+\varepsilon/2)\right)^{\frac{n}{k}}(n/k)!}{\binom{k}{\lfloor
k/2\rfloor}^{\frac{n}{k}}(n/k)!(n/k)!}\\
&=\frac{n!}{(k!)^{\frac{n}{k}}(n/k)!}\cdot
\left((1-o(1))(\delta+\varepsilon/2)\right)^{\frac{n}{k}}\\
&=(1-o(1))^n\cdot
n!\cdot\left(\frac{\alpha-\varepsilon/2}{k!}\right)^{\frac{n}{k}}
\end{align*}
perfect matchings in $\mathcal H$. This completes the proof of
Theorem \ref{main1}.
\end{proof}
\subsection{Proofs of Theorems \ref{main2} and \ref{main3}}
In this subsection we prove Theorems \ref{main2} and \ref{main3}. We
start with Theorem \ref{main2}.

\begin{proof} The proof is rather similar to the proof of the main
results in \cite{FK}. The main difference is that here we use
Theorem \ref{RandomKaterinis} in order to find many edge-disjoint
perfect matchings in random subgraphs of a graph which is not
necessarily the complete bipartite graph. Let
$\varepsilon=\alpha-\alpha'$, and note that $\delta_{k-1}(\mathcal
H)\geq (\alpha'+\varepsilon)n$. We distinguish between two cases:

{\bf Case I:} $1\leq \ell<k/2$. In this case the general scheme goes
as follows:

First, choose $r:=|E(\mathcal H)|\cdot\left(\frac{(k-\ell)\ln
n}{n}\right)^2$ random partitions of $V(\mathcal H)$,
$\{(A_i,B_i):1\leq i\leq r\}$, such that
$|A_i|=\ell\cdot\frac{n}{k-\ell}$ for each $i$, equipped with
$\mathcal M_{A_{i}}$ and $\mathcal M_{B_{i}}$ as described in
Section \ref{subsec:properties}. For each such partition
$(A_i,B_i)$, denote the corresponding auxiliary graph $G_{\mathcal
H}$ by $G^{(i)}$, and use the notation $\mathcal
M_{A_i}=\{F_{i,0},\ldots,F_{i,m-1}\}$, where $m=\frac{n}{k-\ell}$.
Note that by Lemma \ref{key2} we have that w.h.p
$\delta(G^{(i)})\geq (\alpha'+\varepsilon/2)m$ for every $1\leq
i\leq
r$. 

Second, for each edge $f\in E(\mathcal H)$, we say that $i$ is a
\emph{candidate} for $f$ if there exist $j$ and $B\in \mathcal
M_{B_i}$ such that $f=F_{i,j}\cup B\cup F_{i,j+1}$. For each edge
$f\in E(\mathcal H)$ let $\psi(f)$ denote the number of candidates
it has. Each $f\in E(\mathcal H)$ with $\psi(f)>0$ picks one
candidate $i$ at random among the $\psi(f)$ candidates. For each
$1\leq i\leq r$, consider the subhypergraph $\mathcal H_i$ obtained
from the partition $(A_i,B_i)$ together with the edges that chose
$i$, and denote the corresponding auxiliary subgraph of $G^{(i)}$ by
$H_i$. Observe that the hypergraphs ${\cal H}_i$ are edge-disjoint.

Finally, we wish to show that w.h.p every auxiliary graph $H_i$
contains $(1-o(1))\frac{f(\alpha')m}{\ln^2n}$ edge-disjoint perfect
matchings, where $f(\alpha')=\frac{\alpha'+\sqrt{2\alpha'-1}}{2}$.
We then conclude that every subhypergraph $\mathcal H_i$ contains
$(1-o(1))\frac{f(\alpha')m}{\ln^2n}$ edge-disjoint Hamilton
$\ell$-cycles for each $i$, and therefore $\mathcal H$ contains at
least
$$(1-o(1))r\cdot\frac{f(\alpha')m}{\ln^2n}=(1-o(1))\frac{|E(\mathcal
H)|\cdot f(\alpha')}{\frac{n}{k-\ell}}$$ edge-disjoint Hamilton
$\ell$-cycles as required. To this end we need the following claim:

\begin{claim}\label{claim3}
With high probability the following holds: every $H_i$ contains at
least $(1-o(1))\frac{f(\alpha')n}{\ln^2n}$ edge-disjoint perfect
matchings.
\end{claim}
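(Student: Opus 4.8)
The plan is to analyze the random process that produces $H_i$ and show that after the ``candidate assignment'' step, $H_i$ is w.h.p.\ a dense random-like subgraph of $G^{(i)}$ to which Theorem~\ref{RandomKaterinis} applies. First I would fix $i$ and understand the distribution of $H_i$ inside $G^{(i)}$: an edge $st$ of $G^{(i)}$ (corresponding to a hyperedge $f = F_{i,j}\cup B\cup F_{i,j+1}$) survives in $H_i$ precisely when $f$ chose the index $i$, which happens with probability $1/\psi(f)$, where $\psi(f)$ is the number of candidates of $f$. The key point is that $\psi(f)$ is itself a sum of indicator variables over the $r$ random partitions, with $\mathbb{E}[\psi(f)] = r\cdot \Theta\!\big((k-\ell)/n\big)^2\cdot(\text{const})$, so $\mathbb{E}[\psi(f)] = \Theta(\ln^2 n)$; by Chernoff (Lemma~\ref{Che}) and a union bound over all $f\in E(\mathcal H)$, w.h.p.\ every $f$ with $\psi(f)>0$ satisfies $\psi(f) = (1\pm o(1))\,c\ln^2 n$ for the appropriate constant $c$, and moreover the edges of $G^{(i)}$ each lie in some hyperedge $f$ with $\psi(f)>0$. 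Condition on this good event for the $\psi$-values (which is determined by the choice of the $r$ partitions, independently of the candidate choices).

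Next I would argue that, conditioned on the partitions (hence on $G^{(i)}$ and on all the $\psi(f)$ values), the graph $H_i$ stochastically dominates — up to the $o(1)$ slack — a random subgraph $G^{(i)}_p$ with $p = (1-o(1))/(c\ln^2 n)$, because each edge survives independently with probability $1/\psi(f)\ge (1-o(1))/(c\ln^2 n) =: p$. (Strictly, the survival of edges sharing the same $f$ is not independent, but distinct edges of a single $G^{(i)}$ correspond to distinct hyperedges $f$, since $B$, $F_{i,j}$, $F_{i,j+1}$ are all determined by the pair $(s,t)$; so within a fixed $i$ the relevant indicators are genuinely independent, and by Remark~\ref{remark:notsameprobability} allowing each edge its own probability $p_e\ge p$ is harmless.) Now apply Theorem~\ref{RandomKaterinis}: $G^{(i)}$ is bipartite with both sides of size $m = \Theta(n)$, it has minimum degree $\delta(G^{(i)})\ge (\alpha'+\varepsilon/2)m$ with $\alpha'+\varepsilon/2 > 1/2$, and by Theorem~\ref{Csaba} it contains a $\rho m$-factor with $\rho = f(\alpha'+\varepsilon/2) \ge f(\alpha')$; since $p = \omega(\ln m/m)$ fails here — in fact $p = \Theta(1/\ln^2 n) = o(\ln n/n)$ times $n$ — wait, we need $np = \omega(\ln n)$, and indeed $mp = \Theta(n/\ln^2 n) = \omega(\ln n)$, so the hypothesis of Theorem~\ref{RandomKaterinis} is met. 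It yields that w.h.p.\ $G^{(i)}_p$, and hence $H_i$, contains a $(1-o(1))\rho m p = (1-o(1)) f(\alpha')\, m/(c\ln^2 n)$-factor, which (after absorbing the constant $c$ into the definition of $r$, i.e.\ choosing $r = |E(\mathcal H)|\big((k-\ell)\ln n/n\big)^2$ so that $c$ matches) is a $(1-o(1)) f(\alpha')\, m/\ln^2 n$-factor. A bipartite $k$-factor decomposes into $k$ edge-disjoint perfect matchings, giving the claimed $(1-o(1)) f(\alpha') n/\ln^2 n$ edge-disjoint perfect matchings in $H_i$.

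Finally I would remove the conditioning and take a union bound over $1\le i\le r = \mathrm{poly}(n)$: each of the bad events (the $\psi$-values being off, or $H_i$ failing to have the desired factor) has probability $n^{-\omega(1)}$ by the Chernoff bounds and by the $1-n^{-\omega(1)}$ guarantee in Theorem~\ref{RandomKaterinis}, so all $r$ graphs $H_i$ simultaneously satisfy the conclusion w.h.p. I expect the main obstacle to be the bookkeeping around $\psi(f)$: one must verify that $\mathbb{E}[\psi(f)]$ is the same $\Theta(\ln^2 n)$ quantity for every hyperedge $f$ (independent of $f$, using that $\mathcal M_{A_i}, \mathcal M_{B_i}$ are chosen uniformly so each fixed $f$ has the same chance of being realized as $F_{i,j}\cup B\cup F_{i,j+1}$ for some $j$ and $B\in\mathcal M_{B_i}$), and that concentrating $\psi(f)$ to within a $(1\pm o(1))$ factor is enough to conclude the per-edge survival probability is $(1-o(1))p$ for a \emph{uniform} $p$ across all edges of all $G^{(i)}$ — this is exactly what makes the clean application of Theorem~\ref{RandomKaterinis} (with Remark~\ref{remark:notsameprobability}) possible.
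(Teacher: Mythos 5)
Your proposal is correct and follows essentially the same route as the paper: bound $\psi(f)$ from above by $(1+o(1))\ln^2 n$ via Chernoff and a union bound, deduce that each edge of $G^{(i)}$ survives into $H_i$ with probability at least $(1-o(1))/\ln^2 n$, feed the $\rho m$-factor guaranteed by Theorem~\ref{Csaba} into Theorem~\ref{RandomKaterinis} (via Remark~\ref{remark:notsameprobability}), and decompose the resulting factor of the bipartite graph into perfect matchings. The only difference is cosmetic: you prove two-sided concentration of $\psi(f)$, whereas only the upper tail is needed (the survival probability $1/\psi(f)$ only requires an upper bound on $\psi(f)$), and your explicit remarks on the independence of the survival indicators within a fixed $G^{(i)}$ make precise a point the paper leaves implicit.
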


\begin{proof} Let $f\in E(\mathcal H)$ be an edge and recall that
the random variable $\psi(f)$ counts the number of partitions
$(A_i,B_i)$ which are candidates for $f$. Observe that for every edge $f$ and index $i$, the $i^{th}$
partition is a candidate for $f$ with the same probability

$$q\leq\frac{|\mathcal M_{A_i}|\cdot|\mathcal M_{B_i}|}{|E(\mathcal H)|}=\frac{m^2}{|E(\mathcal H)|}.$$

(Indeed, the partition $(A_i,B_i)$ is a candidate for at most
$|\mathcal M_{A_i}|\cdot|\mathcal M_{B_i}|=m^2$ edges, among all the
$|E(\mathcal H)|=\Theta(n^k)$ edges of the hypergraph. Due to
symmetry this bound is obtained.)

Therefore, since $\psi(f)\sim \Bin(r,q)$, applying Chernoff  and the
union bound we obtain that w.h.p $\psi(f)\leq (1+o(1))r\cdot q\leq
(1+o(1))\ln^2n$ for each $f\in E(\mathcal H)$. Hence, we conclude
that the edges of $G^{(i)}$ remain in $H_i$ with probability $p\geq
\frac{1-o(1)}{\ln^2n}$. Now, combining Theorem \ref{Csaba}, Remark
\ref{remark:notsameprobability}, and applying the union bound we
conclude that w.h.p. every $H_i$ contains a
$(1-o(1))\frac{f(\alpha')m}{\ln^2n}$-factor. In order to complete
the proof, recall that $H_i$ is bipartite and thus each such factor
can be decomposed to $(1-o(1))\frac{f(\alpha')m}{\ln^2n}$
edge-disjoint perfect matchings.
\end{proof}

{\bf Case II:} $\ell=0$. The proof for this case is similar to
previous case so we omit it. The only difference is that here we use
a slightly different auxiliary graph, so for this case we need to
take $r=|E(\mathcal H)|\cdot \left(\frac{k\ln n}{n}\right)^2$
partitions $(A_i,B_i)$ with $|A_i|=\frac{\lfloor k/2\rfloor\cdot
n}{k}$. All the other calculations remain the same.

This completes the proof of Theorem \ref{main2}.
\end{proof}

Now we prove Theorem \ref{main3}.

\begin{proof} The proof of Theorem \ref{main3} is quite similar to the
previous proof, so we might omit few details. Let $\delta>0$ be a
constant, let $\varepsilon>0$ be a sufficiently small constant (to
be determined later), and let $\mathcal H$ be a $k$-uniform
hypergraph which satisfies the assumptions of the theorem.
Throughout the proof we use similar notation as in the proof of
Theorem \ref{main2}.

First, let $(A,B)$ be a random partition of $V(\mathcal H)$ into two
sets with $|A|=\ell\cdot\frac{n}{k-\ell}$, equipped with $\mathcal
M_A$ and $\mathcal M_B$ as described in Section
\ref{subsec:properties}. Using Remark \ref{remark1} we conclude that
with probability $1-e^{-\Theta(n)}$ we have
$(\alpha-\varepsilon)m\leq \delta(G_{\mathcal H})\leq
\Delta(G_{\mathcal H}) \leq (\alpha+2\varepsilon)m$. Conditioning on
that, similarly to the calculation in Claim \ref{claim3}, we
conclude that for such a partition $(A,B)$ and an edge $f\in
E(\mathcal H)$, the probability that $(A,B)$ is a candidate for $f$
is bounded between $$\frac{|\mathcal
M_{A}|\cdot(\alpha-\varepsilon)m}{|E(\mathcal
H)|}=\frac{(\alpha-\varepsilon)m^2}{|E(\mathcal H)|}$$ and
$$\frac{|\mathcal M_{A}|\cdot(\alpha+2\varepsilon)m}{|E(\mathcal
H)|}=\frac{(\alpha+2\varepsilon)m^2}{|E(\mathcal H)|}.$$

Second, let $q=\frac{(\alpha-\varepsilon)m^2}{|E(\mathcal H)|}$
(clearly, $q$ is a lower bound for that probability), and choose
$r:= |E(\mathcal H)|\cdot\left(\frac{k-\ell}{n}\right)^2\cdot \frac
1q$ random partitions of $V(\mathcal H)$, $\{(A_i,B_i):1\leq i\leq
r\}$, such that $|A_i|=\ell\cdot\frac{n}{k-\ell}$ for each $i$,
equipped with $\mathcal M_{A_{i}}$ and $\mathcal M_{B_{i}}$ as
described in Section \ref{subsec:properties} (and in the proof of
Theorem \ref{main2}).

Third, since $\psi(f)$ is binomially distributed with probability
$q\leq q_f\leq (1+7\varepsilon)q$, by Chernoff's inequality and the
union bound we obtain that $\psi(f)\in (1\pm 8\varepsilon)rq$ holds
for each $f\in E(\mathcal H)$.

Next, using the fact that all the $G^{(i)}$'s are almost regular
(all the degrees lie in the interval $(\alpha\pm 2\varepsilon)m$),
combining Theorem \ref{FactorInAlmostRegular} with Theorem
\ref{RandomKaterinis}, using the fact that $\varepsilon$ is
sufficiently small, we obtain that with probability
$1-n^{-\omega(1)}$ each $H_i$ contains at least
$(1-o(1))(\alpha-20\sqrt{2\varepsilon})\frac{m}{rq}$ edge-disjoint
perfect matchings. Therefore, for each $i$, by taking all the
edge-disjoint Hamilton $\ell$-cycles in $\mathcal H_i$, there is at
most a $40\sqrt{2\varepsilon}$-fraction of edges in $\mathcal H_i$
which are unused. All in all, there is at most
$40\sqrt{2\varepsilon}$-fraction of edges in $\mathcal H$ which are
not covered by any of the Hamilton $\ell$-cycles. Finally, by taking
$\varepsilon$ to be small enough such that
$40\sqrt{2\varepsilon}\leq \delta$ we complete the proof.
\end{proof}

%
%
%

\subsection{Proof of Proposition \ref{main4}}

\begin{proof} Let $k\leq n$ be positive integers. Define a
$k$-uniform hypergraph $\mathcal H$ on $n$ vertices as follows: Let
$V(\mathcal H)=[n]$, and partition $V(\mathcal H)=A\cup B$ into two
sets $A$ and $B$ such that $n/2-1\leq |A|\leq n/2+1$ is an odd
integer. Let $E(\mathcal H)$ consists of all the $k$-tuples
$f\in\binom{[n]}{k}$ for which $|A\cap f|$ is even, and observe that
$\delta_{k-1}(\mathcal H)\geq n/2-k$. Now, let $r$ be an odd integer
and assume towards a contradiction that $\mathcal H$ contains an
$r$-factor $\mathcal H'\subseteq \mathcal H$. Let $\mathcal H''$ be
the multi-hypergraph on the vertex set $A$ which consists of the
(multi-)set of edges $\{A\cap f:f\in E(\mathcal H')\}$. Since all
the edges of $\mathcal H''$ are of even size, the size of $A$ is
odd, and since all the vertex degrees are $r$ (which is odd), we
derive a contradiction.
\end{proof}

\section{Concluding remarks and open problems}

To the best of out knowledge, this paper is the first to deal with
problems of counting and packing in general dense hypergraphs. Here
we have obtained some preliminary results, which suggest many
interesting and challenging problems for further study.

In Theorem \ref{main1} we showed that, for every $\ell<k/2$, the
number of Hamilton $\ell$-cycles in $k$-uniform hypergraphs with
large minimum degree is lower bounded (up to sub-exponential factor)
with the expected number of such cycles in a random hypergraph with
the same density. It would be interesting to generalize it to every
$\ell<k$.

In Theorems \ref{main2} and \ref{main3} we dealt with the question
of packing Hamilton $\ell$-cycles into dense $k$-uniform hypergraph.
We showed that for $\ell<k/2$, if $\delta_{k-1}(\mathcal H)\geq
\alpha n$, for some $\alpha>1/2$, then one can find
$\frac{f(\alpha)|E(\mathcal H)|}{\frac{n}{k-\ell}}$ edge-disjoint
Hamilton $\ell$-cycles. It is natural to try to obtain the best
possible $f(\alpha)$, and to try to generalize our results for every
$\ell\leq k-1$.

As was mentioned in the introduction, K\"uhn, Mycroft and Osthus
showed in \cite{KMO} that $\delta_{k-1}\approx \frac{n}{\lceil
\frac{k}{k-\ell}\rceil(k-\ell)}$ is the correct asymptotic bound for
the existence of a Hamilton $\ell$-cycle. Note that for certain
choices of $k$ and $\ell$ (for example, $k=3$ and $\ell=1$), this
bound is much smaller than the bound of $n/2$ that we considered. It
would be nice to extend our results to hypergraphs with minimum
degrees starting at $\frac{n}{\lceil
\frac{k}{k-\ell}\rceil(k-\ell)}$, for every $\ell< k$.

{\bf Acknowledgement.} A major part of this work was carried out
when Benny Sudakov was visiting Tel Aviv University, Israel. He
would like to thank the School of Mathematical Sciences of Tel Aviv
University for hospitality and for creating a stimulating research
environment.


\begin{thebibliography}{alpha}

\bibitem{AS}
N.\ Alon and J. H. Spencer, {\bf The Probabilistic Method}, third
ed., Wiley, New York, 2008.

\bibitem{CKO}
D. Christofides, D. K\"uhn and D. Osthus, \emph{Edge-disjoint
Hamilton cycles in graphs}, J. Combinatorial Theory Series B (2012)
102, 1035--1060.

\bibitem{Csaba}
B. Csaba, \emph{Regular spanning subgraphs of bipartite graphs of
high minimum degree}, Electronic Journal of Combinatorics, (2007) 14
(1), publication 21.

\bibitem{CKLOT}
B. Csaba, D. K\"uhn, A. Lo, D. Osthus and A. Treglown, \emph{Proof
of the 1-factorization and Hamilton decomposition conjectures},
Memoirs of the American Mathematical Society, to appear.

\bibitem{CK}
B. Cuckler and J. Kahn, \emph{Hamiltonian cycles in Dirac graphs},
Combinatorica (2009) 29 (3), 299--326.

\bibitem{Dirac}
G. A. Dirac, \emph{Some theorems on abstract graphs}, Proceedings of
the London Mathematical Society, (1952) 2, 69--81.

\bibitem{FKS}
A. Ferber, M. Krivelevich and B. Sudakov, \emph{Counting and packing
Hamilton cycles in dense graphs and oriented graph}, to appear in J.
Combinatorial Theory Series B.

\bibitem{FK}
A. Frieze and M. Krivelevich, \emph{Packing Hamilton cycles in
random and pseudo-random hypergraphs}, Random Structures and
Algorithms 41 (2012), 1--22.

\bibitem{HMS}
S. G. Hartke, R. Martin, and T. Seacrest, \emph{Relating minimum
degree and the existence of a k-factor}, Research manuscript (2010).

\bibitem{JLR}
S. Janson, T. \L uczak and  A. Ruci\'nski, {\bf Random graphs},
Wiley, New York, 2000.

\bibitem{Katerinis} P. Katerinis, \emph{Minimum degree of a graph and the existence of k-factors}, In Proceedings of the Indian Academy of Sciences-Mathematical Sciences, vol. 94, no. 2-3, pp. 123--127. Springer India, 1985.

\bibitem{KK} G. Y. Katona and H. A. Kierstead, \emph{Hamiltonian
chains in hypergraphs}, J. Graph Theory, (1999) 30, 205--212.

\bibitem{GaleRyser} M. Krause, \emph{A simple proof of the Gale-Ryser theorem}, The American Mathematical
Monthly 4 (1996), 335--337.


\bibitem{KLS}
M. Krivelevich, C. Lee and B. Sudakov, \emph{Robust Hamiltonicity of
Dirac graphs}, Transactions of the American Mathematical Society 366
(2014), 3095--3130.

\bibitem{KLO}
D. K\"{u}hn, J. Lapinskas and D. Osthus, \emph{Optimal packings of
Hamilton cycles in graphs of high minimum degree}, Combinatorics,
Probability and Computing, (2013) 22 (03), 394--416.

\bibitem{KMO} D. K\"uhn, R. Mycroft and D. Osthus, \emph{Hamilton
$\ell$-cycles in uniform hypergraphs}, J. Combinatorial Theory
Series A (2010) 117, 910--927.

\bibitem{KO}
D. K\"{u}hn and D. Osthus, \emph{Hamilton decompositions of regular
expanders: a proof of Kelly's conjecture for large tournaments},
Advances in Mathematics (2013) 237, 62--146.


\bibitem{Lovasz}
L. Lov\'asz, {\bf Combinatorial problems and exercises}, Vol. 361
American Mathematical Society, 2007.

\bibitem{MR} K. Markstr\"om and A. Ruci\'nski, \emph{Perfect matchings
(and Hamilton cycles) in hypergraphs with large degrees}, European
Journal of Combinatorics, (2011) 32 (5), 677--687.

\bibitem{Mac}
C. McDiarmid, Concentration, in: \emph{Probabilistic Methods for
Algorithmic Discrete Mathematics}, Algorithms Combin., 16, Springer,
Berlin, 1998, 195--248.

\bibitem{NashWilliams1}
C. Nash-Williams, \emph{Hamiltonian lines in graphs whose vertices
have sufficiently large valencies}, Combinatorial Theory and Its
Applications, III, North-Holland, 1970, 813--819.

\bibitem{NashWilliams2}
C. Nash-Williams, \emph{Edge-disjoint Hamiltonian circuits in graphs
with vertices of large valency}, Studies in Pure Mathematics (L.
Mirsky, ed.), 157--183, Academic Press, London, 1971.

\bibitem{NashWilliams3}
C. Nash-Williams, \emph{Hamiltonian arcs and circuits}, Recent
Trends in Graph Theory, Springer, 1971, 197--210.

\bibitem{RR}
V. R{\"o}dl and A. Ruci{\'n}ski, \emph{Dirac-type questions for
hypergraphs -- a survey (or more problems for Endre to solve)}, in
{\bf An Irregular Mind}, Springer Berlin Heidelberg (2010),
561--590.

\bibitem{RRS1}
V. R{\"o}dl, A. Ruci{\'n}ski and E. Szemer{\'e}di, \emph{An
approximate Dirac-type theorem for k-uniform hypergraphs},
Combinatorica, (2008) 28(2), 229--260.

\bibitem{RRS}
V. R{\"o}dl, A. Ruci{\'n}ski and E. Szemer{\'e}di, \emph{Dirac-type
conditions for hamiltonian paths and cycles in 3-uniform
hypergraphs}, Advances in Mathematics, (2011) 227 (3), 1225--1299.


\bibitem{SSS}
G. S\'ark\"ozy, S. Selkow and E. Szemer\'edi, \emph{On the number of
Hamiltonian cycles in Dirac graphs}, Discrete Math. (2003) 265,
237--250.


\end{thebibliography}
\end{document}